\crefname{equation}{}{}
\newtheorem{theorem}{Theorem}
\newtheorem{lemma}[theorem]{Lemma}
\newtheorem{corollary}[theorem]{Corollary}
\newtheorem{proposition}[theorem]{Proposition}
\newtheorem{definition}[theorem]{Definition}
\newtheorem{assumption}[theorem]{Assumption}
\newtheorem{remark}{Remark}
\newcommand{\E}{\mathbb{E}}
\newcommand{\x}{\boldsymbol{x}}
\newcommand{\z}{\boldsymbol{z}}
\newcommand{\y}{\boldsymbol{y}}
\newcommand{\R}{\mathbb{R}}
\newcommand{\bm}{\boldsymbol}
\newcommand{\abs}[1]{\left| #1 \right|}
\title{Randomized Quasi-Monte Carlo and Importance Sampling for Super-Fast Growing Functions with Applications to Finance}  
\author[1]{Jianlong Chen}
\author[2]{Yu Xu \thanks{Corresponding author: yu-xu22@mails.tsinghua.edu.cn}}
\author[3]{Jiarui Du}
\author[4]{Xiaoqun Wang}
\affil[1,2,4]{Department of Mathematical Sciences, Tsinghua University, Beijing, China}
\affil[3]{School of Mathematics, South China University of Technology, Guangzhou, China}
\date{\today}
\begin{document}

\maketitle

\begin{abstract}
Many problems can be formulated as high-dimensional integrals of discontinuous functions that exhibit significant boundary growth, challenging the error analysis and applications of randomized quasi-Monte Carlo (RQMC) methods. This paper studies RQMC methods for super-fast growing functions satisfying generalized exponential growth conditions, with a special focus on financial derivative pricing.
The main contribution of this paper is threefold. First, by combining RQMC with importance sampling (IS), we derive a new error bound for a class of integrands, whose values and derivatives are bounded by the critical growth function $e^{A|\boldsymbol{x}|^2}$ with $A = 1/2$. This result extends the existing results in the literature, which are limited to the case $A < 1/2$. We demonstrate that by imposing a light-tailed condition on the proposal distribution of IS, RQMC can achieve an error rate of  $O(n^{-1  + \epsilon})$ with a sample size n and an arbitrarily small $\epsilon >0$. 
Second, we verify that the Gaussian proposals used in Optimal Drift Importance Sampling (ODIS) satisfy the required light-tailed condition, providing a rigorous theoretical guarantees for RQMC-ODIS in critical growth scenarios.
Third, for discontinuous integrands from finance, we prove that the integrands after preintegration satisfy the exponential growth condition. This ensures that the preintegrated functions can be seamlessly incorporated into our RQMC-IS framework.
Numerical experiments on financial derivative pricing validate our theory, showing that the RQMC-IS with preintegration is effective in handling problems with discontinuous payoffs, successfully achieving the expected convergence rates.

Keywords:Importance sampling, Quasi-Monte Carlo, Growth condition, Preintegration
\end{abstract}




\maketitle

\section{Introduction}\label{sec:introduction}

Computational finance often faces the challenge of high-dimensional numerical integration, especially in areas such as derivative valuation and risk measurement \cite{Glasserman2004}. A common problem involves estimating the expectations of the form $\mathbb{E}[f(\boldsymbol{X})]$, where $\boldsymbol{X} = (X_1, \ldots, X_d)$ is a $d$-dimensional standard normal random vector, and $f: \mathbb{R}^d \to \mathbb{R}$. In many practical scenarios, $f$ exhibit discontinuities or is defined on unbounded domains. Monte Carlo (MC) method offers a flexible approach for such high-dimensional problems, but it suffers from a relatively slow theoretical convergence rate of $O(n^{-1/2})$ with a sample size $n$, which limit its efficiency when high precision is required.

Quasi-Monte Carlo (QMC) \cite{Caflisch1998, Caflisch1997, dick2010, niederreiter1992, owenqmc}, utilizing low-discrepancy point sets \(\{\boldsymbol{y}_j\}_{j=1}^n\) in \([0,1)^d\), provides a promising alternative. These point sets are designed to fill the unit cube more uniformly than random points, leading to superior empirical performance and a theoretically faster convergence rate, potentially approaching \(O(n^{-1+\epsilon})\) with a sample size n and an arbitrarily small $\epsilon >0$ for sufficiently smooth integrands \cite{owen2006}. Randomized quasi-Monte Carlo (RQMC) methods combine the deterministic sampling of QMC with randomization, preserving the high accuracy of QMC with the error estimation advantages of MC. Common randomization methods include random shift \cite{random_shift} and scramble \cite{owen1995scramble}. By suitable transformation, 
the expectation $\mathbb{E}[f(\bm X)]$ can be changed to the integration over the unit cube 
\begin{equation}
\mathbb{E}[f(\bm X)] = \mathbb{E}[h(\boldsymbol{Y})] = \int_{[0,1)^d} h(\boldsymbol{y})  \mathrm{d}\boldsymbol{y}, \label{term:QMCintegral}
\end{equation}
 and the QMC (RQMC) estimator is \(\widehat{I}_n(h) = \frac{1}{n} \sum_{j=1}^n h(\boldsymbol{y}_j).\)

However, the effectiveness of QMC (RQMC) depends on the properties of the integrand, such as the smoothness and boundedness. Thus two major challenges arise: (1) the function \(f\) arising in finance often contains discontinuities, which violate the smoothness conditions needed for QMC (RQMC); (2) even when \(f\) is smooth, the integrand \(h\) in \cref{term:QMCintegral}  usually has the form \(f\circ\Phi^{-1}\), where \(\Phi\) denotes the cumulative distribution function of the standard normal distribution \(N(0,1)\). The transformation \(\Phi^{-1}\) can introduce singularities or rapid growth near the boundary of the unit cube, while functions like \(e^{\frac{1}{2}|\boldsymbol{x}|^2}\)($|\x|$ denotes the Euclidean norm of $\x$) become unbounded, complicating theoretical analysis.

To overcome the difficulty of discontinuities in the integrands, researchers have developed preprocessing techniques such as the \textit{preintegration}  \cite{preintegration_kuo2013, preintegration_kuo2017, preintegration_kuo2010, preintegration_kuo2018}, which is also called conditional Monte Carlo method \cite{CMC_simulation}. This method involves integrating out one variable (say \(x_j\)) conditioned on the others (denoted by \(\boldsymbol{x}_{-j}\)), defining a smoothed conditional expectation \(P_j f(\boldsymbol{x}_{-j}) = \mathbb{E}[f(\boldsymbol{x}) | \boldsymbol{x}_{-j}]\). For function of the form \( f(\x)=g(\x )\mathbb{I} \{ \phi (\x )\geq 0\}\)( $\mathbb{I}(\cdot)$ denotes the indicator function), under suitable monotonicity and smoothness assumptions on the discontinuity boundary \(\phi(\boldsymbol{x}) = 0\) and on the continuous factor \(g(\boldsymbol{x})\), He \cite{he2019} demonstrated that \(P_j f\) can inherit significant smoothness from \(g\) and \(\phi\).

Importance Sampling (IS) \cite{dick2019, glasserman1999, kuk1999, owen2000, zcj2021} is a variance reduction method in MC. IS introduces a proposal density \(q(\boldsymbol{x})\) to rewrite the expectation
\begin{equation}
\mathbb{E}[f(\boldsymbol{X})] = \int_{\mathbb{R}^d} f(\x)\varphi(\x) d\x = \int_{\mathbb{R}^d} \frac{f(\x)\varphi(\x)}{q(\x)} q(\x) d\x := \mathbb{E}_q\left[f_{IS}(\boldsymbol{X})\right] , \notag
\end{equation}
where \( f_{IS}(\boldsymbol{x}) = \frac{\varphi(\boldsymbol{x})f(\boldsymbol{x})}{q(\boldsymbol{x})}\), and \(\varphi\) is the standard normal density. Then QMC (RQMC)-IS estimator is  \(\widehat{I}_n^{IS}(f) = \frac{1}{n} \sum_{j=1}^n f_{IS} \circ G^{-1}(\boldsymbol{y}_j)\), where \(G^{-1}\) is the inverse of the cumulative distribution function (C.D.F.) of \(q\). There is a lot of theoretical analysis on MC-IS, but few convergence results on QMC-IS. Recently, He et al. \cite{zhengzhan} and Ouyang et al. \cite{Ouyang2024} analyzed the convergence results of RQMC-IS. This paper builds on these existing results to perform a deeper analysis and get more general results.

A popular choice for the proposal in derivative pricing is the Optimal Drift Importance Sampling (ODIS), which selects the proposal \(q(\boldsymbol{x}) = \varphi(\boldsymbol{x} - \boldsymbol{\mu}^*)\) from the family of shifted normal densities \(N(\boldsymbol{\mu}, \boldsymbol{I}_d)\) where \(\boldsymbol{\mu}^* = \arg \max \varphi(\boldsymbol{z})f(\boldsymbol{z})\). Glassman \cite{Glasserman2004} studied the convergence of MC-ODIS and its applications in finance, confirming the effectiveness of ODIS in option pricing, risk management, and other tasks under various financial models. He et al. \cite{zhengzhan} obtained the convergence order of RQMC-ODIS under the so-called boundary growth condition proposed by Owen \cite{owen2006}.

However, the theory of RQMC-IS still has many limitations and challenges. Recent work by Ouyang et al. \cite{Ouyang2024} established the convergence rates for RQMC-IS under the condition that \(f_{IS} = f\varphi/q\) belongs to a function class \(\mathcal{S}(\mathbb{R}^d)\) satisfying a growth condition 
\begin{equation*}
    \sup \limits_{\boldsymbol{u}\subseteq\{1,\cdots,d\}} |\partial^{\boldsymbol{u}} f_{IS}(\boldsymbol{x})| \leq C e^{A|\boldsymbol{x}|^k}
\end{equation*}
with \(k=2\) and \(A < 1/2\). 
Critically, as highlighted in Section \ref{sec:ImportanceSampling} of this paper, an example $f_{IS} = 1/\varphi \propto e^{\frac{1}{2}|\boldsymbol{x}|^2}$ arising in ODIS for constant $f$ violates this key condition, since the derivatives of $f_{IS}$ grow too rapidly to be bounded by $e^{A|\boldsymbol{x}|^2}$ with $A<1/2$ (cannot even bounded by $e^{\frac{1}{2}|\boldsymbol{x}|^2}$).
This paper bridges this theoretical gap. We first extend the convergence theory for RQMC-IS to a generalized growth condition of the form \(Ce^{A|\boldsymbol{x}|^2 + B|\boldsymbol{x}|}\), accommodating the critical \(A=1/2\) case. Specifically, we show that ODIS satisfies the conditions of our theorem. Building on this, we present and prove a theorem demonstrating that functions after preintegration also satisfy our growth condition. We thus establish that our results are applicable to problems in finance.

More specifically, our first key contribution is a generalized convergence theory for RQMC-IS. We define a broader growth condition of the form \(Ce^{A|\boldsymbol{x}|^2 + B|\boldsymbol{x}|}\) for \(f_{IS}\), which enables us to prove the convergence rates for the RQMC-IS estimator \(\widehat{I}_n^{IS}(f)\). For \(A < 1/2\), we recover the near \(O(n^{-1})\) rate under a mild condition (see \ref{condition:a} in Section \ref{sec:MainResults}) on the proposal \(q\). Crucially, for the critical case \(A = 1/2\), we prove a convergence rate under a light-tailed condition (see \ref{condition:b} in Section \ref{sec:MainResults}) on \(q\), circumventing the \(A < 1/2\) barrier of previous work. 
Second, we demonstrate the applicability of our theory to ODIS. We formally show that the Gaussian proposals \(q(\boldsymbol{x}) = \varphi(\boldsymbol{x} - \boldsymbol{\mu})\) used in ODIS satisfy the required light-tailed condition (see Theorem  \ref{prop:ODIS}).
Finally, we prove that functions after preintegration satisfy our growth condition. We analyze the preintegration operator \(P_j\) applied to discontinuous functions of the form \(f(\boldsymbol{x}) = g(\boldsymbol{x})\mathbb{I}_{\{\phi(\boldsymbol{x}) \geq 0\}}\). Building on He's results \cite{he2019}, we show that if the original components \(g\) and \(\phi\) satisfy exponential growth conditions (see Assumption \ref{assum:grc}), the smoothed function \(P_j f\) also satisfies our generalized exponential growth condition (see Theorem \ref{thm:pvf}). This enables the application of our RQMC-IS convergence theory to the preintegrated functions, providing a complete theoretical framework for handling discontinuous payoffs using a combination of preintegration, IS (like ODIS), and RQMC.

The remainder of this paper is organized as follows. Section \ref{sec:ImportanceSampling} reviews importance sampling and the limitations of the existing RQMC convergence theory, particularly with regard to ODIS. Section \ref{sec:MainResults} presents our main convergence results for RQMC-IS under generalized growth conditions and demonstrates the applicability of these results to ODIS. Section \ref{sec:Preintegration} details the preintegration technique and proves that it preserves exponential growth rates under suitable assumptions. Section \ref{sec:NumericalExperiments} applies our theory to option pricing problems and presents numerical results. Section \ref{sec:Conclusion} concludes and discusses future research directions. Proofs of supporting lemmas are provided in Appendix \ref{sec:Appendix}.

\section{Preliminary and Previous Work}\label{sec:ImportanceSampling}
\subsection{Importance Sampling}
Many problems in financial engineering can be reduced to calculating the expectation \(\E[f(\boldsymbol{X})]\), where $\boldsymbol{X}$ is a d-dimensional standard normal random vector. The problems can be transformed into an integral over the unit cube, i.e.,
\begin{equation*}
\mathbb{E}[f(\boldsymbol{X})] = \int_{[0,1)^d} f \circ \Phi^{-1}(\boldsymbol{y}) \mathrm{d}\boldsymbol{y},
\end{equation*}
where \(\Phi^{-1}(\cdot)\) is the inverse of the C.D.F. of \(N(0,1)\), acting on \(\boldsymbol{y}\) component-wisely. To estimate the expectation, RQMC quadrature rule of the following form can be used
\begin{equation}\label{term:StandardEstimator}
\widehat{I}_n(f) = \frac{1}{n} \sum_{j=1}^{n} f \circ \Phi^{-1}(\boldsymbol{y}_j),
\end{equation}
where $\{ \boldsymbol{y}_j \}$ is a randomized low discrepancy point set in $[0,1)^d$. 

Importance sampling is a variance reduction method in MC methods by choosing a suitable importance density. Formally, let $\varphi$ be the density of the d-dimensional standard normal distribution and $q$ be another density. Then we can rewrite \(\E[f(\boldsymbol{X})]\) as
\begin{equation}\label{term:IntegralOfIS}
\mathbb{E}[f(\bm X)] = \int_{\mathbb{R}^d} f(\boldsymbol{x})\varphi(\boldsymbol{x}) \mathrm{d}\boldsymbol{x} = \int_{\mathbb{R}^d} \frac{\varphi(\boldsymbol{x}) f(\boldsymbol{x})}{q(\boldsymbol{x})} q(\boldsymbol{x}) \mathrm{d}\boldsymbol{x} := \E_q[f_{IS}(\boldsymbol{X})], 
\end{equation}
where \(f_{IS} := \frac{\varphi f}{q}\) and \( \boldsymbol{X} \) follows the distribution corresponding to density $q$. Then $G^{-1}(\boldsymbol{U})$ follows the distribution corresponding to density $q$ when $\boldsymbol{U}$ follows the uniform distribution on $[0,1)^d$, where \(G^{-1}\) is the inverse of the C.D.F. of \(q\). Thus the importance sampling estimator is 
\begin{equation}\label{eq:IS_estimator}
\widehat{I}^{IS}_n(f) = \frac{1}{n} \sum_{j=1}^n f_{IS} \circ G^{-1}(\boldsymbol{y}_j),
\end{equation}
where $\{ \boldsymbol{y}_j \}$ is a randomized low discrepancy point set in $[0,1)^d$. 

 The selection of the proposal is crucial. Note that the original density is the standard normal distribution \( N(0, I_d) \). A popular method called Optimal Drift Importance Sampling (ODIS) chooses the proposal from the normal distribution family \( N(\mu, I_d) \). Formally, denote the support set of $f$ as $D$, then the density of the ODIS-proposal is given by \cite{glasserman1999} 
\begin{equation}
q(\boldsymbol{z}) = (2\pi)^{-d/2} \exp \left[ -\frac{1}{2} (\boldsymbol{z} - \boldsymbol{\mu}^\star)^T (\boldsymbol{z} - \boldsymbol{\mu}^\star) \right], \label{ODIS_proposal}
\end{equation}
where
\begin{equation*}
\boldsymbol{\mu}^\star = \arg \max_{\boldsymbol{z} \in D} \varphi(\boldsymbol{z})f(\boldsymbol{z}).\notag
\end{equation*}
The ODIS is widely used in practice, both in MC and QMC setting. However , the theoretical foundation of QMC-based ODIS is not yet fully established. He et al. \cite{zhengzhan} proved the convergence rate of QMC-ODIS under Owen's boundary growth condition \cite{owen2006} when using scrambled nets. We will study the convergence of RQMC-ODIS under a more general framework.
\subsection{Convergence rate of importance sampling for RQMC methods}
Below we review several related results on QMC-IS.
For convenience, we introduce some notations. Denote $1{:}d=\{1,2,\dots,d\}$ and
$\partial_j\phi:=\partial \phi/\partial x_j$. For $\boldsymbol{u}\subseteq 1{:}d$, $\partial^{\boldsymbol{u}} \phi$ denotes the derivative taken with respect to each $x_j$ once  for all $j\in \boldsymbol{u}$.  For any multi-index $\bm \alpha=(\alpha_1,\dots,\alpha_d)$ whose components are nonnegative integers, denote
\begin{equation*}
(\partial^{\bm \alpha} \phi)(\bm x):=\frac{\partial^{\abs{\bm \alpha}}\phi}{\partial x_1^{\alpha_1}\dots x_d^{\alpha_d}}(\bm x),
\end{equation*}
where $\abs{\bm\alpha}=\sum_{i=1}^{d}\alpha_i$. If $\alpha_i=1$ for all $i\in \boldsymbol{u}$ and $\alpha_i=0$ otherwise, then $\partial^{\bm \alpha} \phi=\partial^{\boldsymbol{u}} \phi$. 

\begin{definition}
A function $f(\boldsymbol{x})$ defined over $\mathbb{R}^d$ is called a smooth function if for any $\boldsymbol{u} \subseteq 1{:}d$, $\partial^{\boldsymbol{u}} f$ is continuous. Let $\mathcal{S}(\mathbb{R}^d)$ be the class of such smooth functions.   
\end{definition}
Recently, Ouyang et al. \cite{Ouyang2024} defined the growth conditions for the functions in $\mathcal{S}(\mathbb{R}^d)$
and established the following theorem by a projection method for this function class.
\begin{proposition}\label{prop:ProjectionForStandardRQMC}
Let \(\{\boldsymbol{y}_1, \ldots, \boldsymbol{y}_n\}\) be a RQMC point set used in the estimator \(\widehat{I}_n(f)\) given by \cref{term:StandardEstimator} such that each \(\boldsymbol{y}_j \sim U[0,1]^d\) and
\[ D_n^*(\{\boldsymbol{y}_1, \ldots, \boldsymbol{y}_n\}) \leq C \frac{(\log n)^{d-1}}{n} \quad \text{a.s.,} \]
where \(C\) is a constant independent of \(n\). For any function $f \in \mathcal{S}(\mathbb{R}^d)$, if it satisfies that
\begin{equation}
\sup \limits_{\boldsymbol{u} \subseteq 1:d} |\partial^{\boldsymbol{u}} f(\boldsymbol{x})| \leq C e^{A|\boldsymbol{x}|^k} , \forall \boldsymbol{x} \in \mathbb{R}^d \label{term:GrowthCondition}
\end{equation}
 with \(A>0, C>0\) and \(0 < k < 2 \), then we have
\begin{equation}
\mathbb{E} \left[ \left( \widehat{I}_n(f) - \mathbb{E}[f(\boldsymbol{X})] \right)^2 \right] = O \left( n^{-2} (\log n)^{3d-2} \exp \left\{ 2A (8d \log n)^{\frac{k}{2}} \right\} \right), \notag
\end{equation}
where the constant in \(O\) depends on $A,C,k$ and $d$.
\end{proposition}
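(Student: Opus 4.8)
The plan is to reduce the estimation of $\widehat I_n(f) - \E[f(\boldsymbol X)]$ to a QMC problem on a truncated cube where the Koksma--Hlawka inequality applies, and to control the discarded tail separately. Concretely, fix a truncation level $\delta = \delta_n \in (0,1/2)$ to be chosen at the end, and split the integral
\[
\widehat I_n(f) - \int_{[0,1)^d} f\circ\Phi^{-1}(\boldsymbol y)\,\mathrm d\boldsymbol y
= \Big(\tfrac1n\sum_{j=1}^n g(\boldsymbol y_j) - \int_{[0,1)^d} g(\boldsymbol y)\,\mathrm d\boldsymbol y\Big) + \Big(\tfrac1n\sum_{j=1}^n (f\circ\Phi^{-1}-g)(\boldsymbol y_j)\Big) - \int_{[0,1)^d}(f\circ\Phi^{-1}-g)(\boldsymbol y)\,\mathrm d\boldsymbol y,
\]
where $g$ is the restriction (or a smooth modification) of $f\circ\Phi^{-1}$ to the box $[\delta,1-\delta]^d$, extended by a constant outside. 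The first bracket is handled by Koksma--Hlawka: $|\tfrac1n\sum g(\boldsymbol y_j) - \int g| \le V_{HK}(g)\,D_n^*(\{\boldsymbol y_j\})$ almost surely, so I need a bound on the Hardy--Krause variation of $g$ on the truncated box. The remaining terms are tail contributions: the integral tail is bounded deterministically, and the sampled tail is bounded in $L^2$ (or expectation) using that each $\boldsymbol y_j\sim U[0,1]^d$, so $\E[|(f\circ\Phi^{-1}-g)(\boldsymbol y_j)|^2] = \int_{[0,1)^d\setminus[\delta,1-\delta]^d}|\cdots|^2$.

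The key estimates are as follows. First, via the Vitali/Hardy--Krause representation, $V_{HK}(g)$ is bounded by a sum over $\boldsymbol u\subseteq 1{:}d$ of $\int_{[\delta,1-\delta]^{|\boldsymbol u|}} |\partial^{\boldsymbol u}(f\circ\Phi^{-1})|$ evaluated with the other coordinates at the boundary. Using the chain rule, each mixed partial of $f\circ\Phi^{-1}$ is a sum of terms $(\partial^{\boldsymbol v}f)(\Phi^{-1}(\boldsymbol y))$ times products of derivatives of $\Phi^{-1}$; since $(\Phi^{-1})'(t) = 1/\varphi(\Phi^{-1}(t))$ and higher derivatives are polynomials in $\Phi^{-1}$ divided by powers of $\varphi(\Phi^{-1})$, and since $|\Phi^{-1}(t)| \le \sqrt{2\log(1/t)}$ roughly near the endpoints, the growth bound \cref{term:GrowthCondition} with $k<2$ dominates: on $[\delta,1-\delta]^d$ one gets $|\partial^{\boldsymbol u}(f\circ\Phi^{-1})(\boldsymbol y)| \le C' \exp\{A'(\log(1/\delta))^{k/2}\}\cdot(\text{poly in }1/\delta)$ for suitable $A',C'$. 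Hence $V_{HK}(g) = O\!\big((1/\delta)^{c}\exp\{A'(\log(1/\delta))^{k/2}\}\big)$ for some constant power $c=c(d)$. Combined with $D_n^* = O(n^{-1}(\log n)^{d-1})$, the first term is $O\!\big(n^{-1}(\log n)^{d-1}(1/\delta)^{c}\exp\{A'(\log(1/\delta))^{k/2}\}\big)$.

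Second, for the tail, $[0,1)^d\setminus[\delta,1-\delta]^d$ is covered by $2d$ slabs of the form $\{y_i<\delta\}$ or $\{y_i>1-\delta\}$; on each such slab, changing variables back to $\boldsymbol x = \Phi^{-1}(\boldsymbol y)$ and using the growth bound with $k<2$ against the Gaussian weight $\varphi$, the integrals $\int |f\circ\Phi^{-1}|$ and $\int|f\circ\Phi^{-1}|^2$ over the slab are $O\!\big(\Phi(-t_\delta)\,\text{poly}(t_\delta)\big)$ with $t_\delta = \Phi^{-1}(1-\delta) \asymp \sqrt{2\log(1/\delta)}$, which decays like $\delta\cdot\text{poly}(\log(1/\delta))$ — far faster than any power needed. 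So both tail terms contribute $O(\delta^{1-o(1)})$ in the relevant norm, negligible compared to the main term once we optimize.

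Finally, choose $\delta = \delta_n$ so the two sources of error balance; taking $\log(1/\delta) \asymp 8d\log n$ (equivalently $\delta \asymp n^{-8d}$) makes the tail super-polynomially small while inflating the variation factor by exactly $\exp\{A'(8d\log n)^{k/2}\}$ and the power $(1/\delta)^c$ by a factor $n^{O(d)}$; absorbing that polynomial factor and the $D_n^*$ logarithm into the stated $(\log n)^{3d-2}$ and keeping $n^{-2}$ from squaring the Koksma--Hlawka bound yields
\[
\E\big[(\widehat I_n(f)-\E[f(\boldsymbol X)])^2\big] = O\!\big(n^{-2}(\log n)^{3d-2}\exp\{2A(8d\log n)^{k/2}\}\big).
\]
I expect the main obstacle to be the bookkeeping in the chain-rule bound for $V_{HK}$ on the truncated box: one must track how the anchored lower-dimensional faces of $[\delta,1-\delta]^d$ interact with the blow-up of $\Phi^{-1}$ and its derivatives near $0$ and $1$, and verify that the constant $A'$ in the exponent can be taken arbitrarily close to $A$ (so that the factor $\exp\{2A(8d\log n)^{k/2}\}$ — not a larger multiple of $A$ — appears), which is exactly where the restriction $k<2$ is essential, since for $k=2$ the bound $\exp\{A(\sqrt{2\log(1/\delta)})^2\} = (1/\delta)^{2A}$ would be a genuine power of $1/\delta$ and the balance would fail.
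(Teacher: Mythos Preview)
Your projection-and-balance strategy is the approach the paper (citing Ouyang et al.) uses, and is exactly how the paper itself proves the analogous Theorem~\ref{thm:MainTheorem}. But there is a genuine gap in your variation estimate. You bound $|\partial^{\boldsymbol u}(f\circ\Phi^{-1})|$ pointwise on $[\delta,1-\delta]^d$ by $\text{poly}(1/\delta)\cdot\exp\{A'(\log(1/\delta))^{k/2}\}$ and conclude $V_{HK}(g)=O\big((1/\delta)^c\exp\{\cdots\}\big)$; then with $\delta\asymp n^{-8d}$ you write that the factor $(1/\delta)^c=n^{O(d)}$ can be ``absorbed into the stated $(\log n)^{3d-2}$''. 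That step is false: a genuine power of $n$ cannot be absorbed into a polylogarithmic term, so as written your bound is $O(n^{-2+O(d)})$, which is vacuous.

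The missing idea is that the Hardy--Krause integrals must be \emph{evaluated}, not bounded by a pointwise sup. Since $\Phi^{-1}$ acts coordinatewise and only first mixed partials are needed, $\partial^{\boldsymbol u}(f\circ\Phi^{-1})(\boldsymbol y)=(\partial^{\boldsymbol u}f)(\Phi^{-1}(\boldsymbol y))\prod_{j\in\boldsymbol u}(\Phi^{-1})'(y_j)$, and the substitution $x_j=\Phi^{-1}(y_j)$ turns $\prod_{j\in\boldsymbol u}(\Phi^{-1})'(y_j)\,\mathrm dy_j$ into $\prod_{j\in\boldsymbol u}\mathrm dx_j$: the $1/\delta$ blow-up is exactly cancelled by the Jacobian, and each face integral becomes $\int_{[-R,R]^{|\boldsymbol u|}}|\partial^{\boldsymbol u}f|\le C(2R)^{|\boldsymbol u|}e^{A(\sqrt d\,R)^k}$ with $R=\Phi^{-1}(1-\delta)$. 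This gives $V_{HK}=O\big(R^d e^{A(\sqrt d\,R)^k}\big)$, polynomial in $R\sim\sqrt{2\log(1/\delta)}$ rather than in $1/\delta$. The paper organizes this by projecting in $\mathbb R^d$ rather than on the cube: a smooth clip $P_R:\mathbb R^d\to[-R+\tfrac12,R-\tfrac12]^d$ is composed as $f\circ P_R\circ\Phi^{-1}$, which keeps the composition smooth (your ``extend by a constant'' is not, so the integral formula for $V_{HK}$ does not apply directly) and whose derivative supplies the indicator $\mathbb I_{\{|x_j|\le R\}}$ that localizes the integral. With the corrected bound one takes $R^2=8\log n$ so the Gaussian tail is $O(n^{-2})$; then $R^{2d}=(8\log n)^d$ contributes the extra $(\log n)^d$ to make $(\log n)^{3d-2}$, and $(\sqrt d\,R)^k=(8d\log n)^{k/2}$ gives the stated exponent.
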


Moreover, similar result can be established for QMC-IS. Suppose that $q$ in \cref{term:IntegralOfIS} has the form
\begin{equation*}
q(\boldsymbol{x}) = \prod_{j=1}^d q_j(x_j) 
\end{equation*}
and let $G_j$ be the C.D.F. of $q_j$. Then $G^{-1}(\boldsymbol{y})$ can be written as $(G_1^{-1}(y_1),\cdots,G_d^{-1}(y_d))$. We have the following theorem for RQMC-IS \cite{Ouyang2024}.

\begin{proposition}\label{prop:ProjectionForIS_A<1/2}
Assume $A < 1/2$. If the RQMC point set $\{ \boldsymbol{y}_1, \ldots, \boldsymbol{y}_n \}$ satisfies the conditions in Proposition \ref{prop:ProjectionForStandardRQMC}. For any function \(f\) in \(\mathcal{S}(\mathbb{R}^d)\), if the proposal $q$ satisfies that $\mathbb{E}_q \left[ |\boldsymbol{X}|^2 \right] < \infty, f/q \in \mathcal{S}(\mathbb{R}^d)$ and
\begin{equation}
\sup \limits_{\boldsymbol{u} \subseteq 1:d} |\partial^{\boldsymbol{u}} (\frac{f}{q})(\boldsymbol{x})| \leq C e^{A|\boldsymbol{x}|^2} , \forall \boldsymbol{x} \in \mathbb{R}^d, \label{term:GrowthCondition_IS}
\end{equation}
then for the RQMC-IS estimator $\widehat{I}^{IS}_n(f)$ given in \cref{eq:IS_estimator} we have
\begin{equation*}
\mathbb{E} \left[ \left( \widehat{I}_n^{IS}(f) - \mathbb{E} \left[ f(\boldsymbol{X}) \right] \right)^2 \right] = O \left( n^{-2} (\log n)^{3d-2} \right),
\end{equation*}
where the constant in \(O\) depends on $A,C,k,d$ and $\mathbb{E}_q \left[ |\boldsymbol{X}|^2 \right]$.
\end{proposition}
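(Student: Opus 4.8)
The plan is to reduce the RQMC--IS estimator to an ordinary RQMC estimator of the type treated in Proposition~\ref{prop:ProjectionForStandardRQMC}. Write $f_{IS}=\varphi f/q$, let $\varphi_1$ be the standard normal density on $\R$, put $\psi_j:=G_j^{-1}\circ\Phi$ for the one--dimensional transport maps and $\psi:=(\psi_1,\dots,\psi_d)$ (acting coordinatewise), and set $F:=f_{IS}\circ\psi$. Since $G^{-1}(\y)=\psi\bigl(\Phi^{-1}(\y)\bigr)$, the RQMC--IS estimator \eqref{eq:IS_estimator} equals $\tfrac1n\sum_{j=1}^n F\bigl(\Phi^{-1}(\y_j)\bigr)=\widehat I_n(F)$, and, by the change of variables $\y=G(\x)$, $\int_{[0,1)^d}F\circ\Phi^{-1}\,\mathrm d\y=\int_{[0,1)^d}f_{IS}\circ G^{-1}\,\mathrm d\y=\int_{\R^d}f_{IS}(\x)q(\x)\,\mathrm d\x=\E[f(\boldsymbol{X})]$ in view of \eqref{term:IntegralOfIS}. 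Hence it would suffice to verify that $F\in\mathcal S(\R^d)$ and that $F$ satisfies the growth condition \eqref{term:GrowthCondition} for \emph{some} $k\in(0,2)$. In fact I would aim for the stronger statement that $F$ and all its mixed partials $\partial^{\boldsymbol u}F$ are \emph{bounded} on $\R^d$; this is the endpoint $A=0$ of that condition, and running the projection argument behind Proposition~\ref{prop:ProjectionForStandardRQMC} with $A=0$ is exactly what kills the sub--polynomial factor $\exp\{2A(8d\log n)^{k/2}\}$ and leaves the clean rate $O(n^{-2}(\log n)^{3d-2})$.

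Two ingredients then have to be assembled. First, the regularity $F\in\mathcal S(\R^d)$ is immediate: $f_{IS}=\varphi\cdot(f/q)$ is smooth (since $f/q\in\mathcal S(\R^d)$ and $\varphi\in C^\infty$), each $\psi_j$ is $C^1$ with $\psi_j'(x)=\varphi_1(x)/q_j(\psi_j(x))$, and since $\psi$ is diagonal the chain rule reads simply $\partial^{\boldsymbol u}F(\x)=(\partial^{\boldsymbol u}f_{IS})(\psi(\x))\prod_{j\in\boldsymbol u}\psi_j'(x_j)$. Second, I would estimate $f_{IS}$: by the Leibniz rule, the identity $\abs{\partial^{\boldsymbol v}\varphi(\z)}=\varphi(\z)\prod_{j\in\boldsymbol v}|z_j|$, and the hypothesis $\abs{\partial^{\boldsymbol w}(f/q)(\z)}\le Ce^{A|\z|^2}$,
\[
  \sup_{\boldsymbol u\subseteq 1{:}d}\abs{\partial^{\boldsymbol u}f_{IS}(\z)}\ \le\ C'\Bigl(\prod_{j=1}^d(1+|z_j|)\Bigr)e^{-(1/2-A)|\z|^2}.
\]
This is where $A<1/2$ is used decisively: the exponent $-(1/2-A)$ is strictly negative, so $f_{IS}$ and all of its partials $\partial^{\boldsymbol u}f_{IS}$ are bounded and decay super--exponentially.

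Combining these,
\[
  \abs{\partial^{\boldsymbol u}F(\x)}\ \le\ C'\prod_{j\in\boldsymbol u}\bigl(1+|\psi_j(x_j)|\bigr)\,\psi_j'(x_j)\;e^{-(1/2-A)\sum_j\psi_j(x_j)^2},
\]
and the crux is to show the right-hand side stays bounded as $|\x|\to\infty$. The mechanism is a competition between distortion and decay: whatever the tails of $q$, the transport map $\psi_j$ and its derivative $\psi_j'=\varphi_1/(q_j\circ\psi_j)$ distort distances by at most a single exponential of $x_j^2$, whereas the factor $e^{-(1/2-A)\psi_j(x_j)^2}$ contributes a \emph{double}--exponential decay in $x_j^2$ whenever $|\psi_j(x_j)|\to\infty$ (and if $\operatorname{supp}q_j$ is bounded, instead $\psi_j'(x_j)\to0$ while $\psi_j$ stays bounded). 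The hypothesis $\E_q[|\boldsymbol{X}|^2]<\infty$ is what makes this precise: through Mills--ratio and tail estimates for the $q_j$ --- the supporting lemmas deferred to the Appendix --- it yields growth bounds of the form $|\psi_j(x)|\le\gamma_1 e^{\gamma_2 x^2}$ and $\psi_j'(x)\le\gamma_3 e^{\gamma_2 x^2}$ with constants depending on $\E_{q_j}[X_j^2]$, which inserted into the last display show that $\sup_{\boldsymbol u}\abs{\partial^{\boldsymbol u}F}$ is bounded on $\R^d$. One would then apply the projection argument of Proposition~\ref{prop:ProjectionForStandardRQMC} to this bounded $F$ (equivalently, with $A$ set to $0$) to conclude $\E[(\widehat I_n^{IS}(f)-\E[f(\boldsymbol{X})])^2]=O(n^{-2}(\log n)^{3d-2})$, with implied constant depending on $A,C,d$ and $\E_q[|\boldsymbol{X}|^2]$.

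The step I expect to be the main obstacle is the last one: bounding the transport map $G^{-1}\circ\Phi$ and its Jacobian and showing their growth is swallowed by the super--exponential decay of $f_{IS}$, for a proposal $q$ assumed only to have a finite second moment (plus the mild regularity, implicit in $f/q\in\mathcal S(\R^d)$, that each $q_j$ be positive and $C^1$ on the interior of its support). This is where both $A<1/2$ and $\E_q[|\boldsymbol{X}|^2]<\infty$ are genuinely needed, and it requires separating the light-, heavy-, and compactly supported tail regimes for $q_j$. A secondary, more technical point is to verify that the projection argument underlying Proposition~\ref{prop:ProjectionForStandardRQMC}, specialized to a bounded integrand (the $A=0$ endpoint, which lies just outside the stated range $A>0$), indeed delivers the sharp rate $O(n^{-2}(\log n)^{3d-2})$ and not merely $O(n^{-2+\epsilon})$.
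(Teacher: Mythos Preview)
The paper does not give its own proof of Proposition~\ref{prop:ProjectionForIS_A<1/2}; it is quoted from \cite{Ouyang2024} and then used as a black box in the proof of Theorem~\ref{thm:MainTheorem}(a). The closest thing to a proof in the paper is the argument for Theorem~\ref{thm:MainTheorem}(b), which projects in the $q$-space: one truncates $\boldsymbol z=G^{-1}(\boldsymbol y)$ via a smooth cutoff $P_R$, splits the mean-squared error into a Koksma--Hlawka piece and a projection piece $\E_q|f_{IS}(\boldsymbol X)-f_{IS}\circ P_R(\boldsymbol X)|^2$, bounds each separately (Lemmas~\ref{lemma:V_HK}--\ref{lemma:ProjectionError}), and optimizes over $R$. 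When $A<1/2$, Lemma~\ref{lemma:DerivativeOfh_IS} makes $\partial^{\boldsymbol u}f_{IS}$ uniformly bounded, so the variation is $O(R^d)$; the projection error is handled by the mean-value theorem together with the decay $e^{-(1/2-A)|\xi|^2}$, leaving a factor $\E_q[|\boldsymbol X-P_R(\boldsymbol X)|^2]$, and $\E_q[|\boldsymbol X|^2]<\infty$ is exactly what bounds that uniformly in $R$. Taking $R\sim\sqrt{\log n}$ balances the two terms to $(\log n)^{3d-2}/n^2$.

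Your route is genuinely different: you pull back to Gaussian space through $\psi=G^{-1}\circ\Phi$, set $F=f_{IS}\circ\psi$, and try to show $F$ has bounded mixed partials so that the projection argument behind Proposition~\ref{prop:ProjectionForStandardRQMC} can be run at the $A=0$ endpoint. Your Leibniz bound on $\partial^{\boldsymbol u}f_{IS}$ is correct and matches Lemma~\ref{lemma:DerivativeOfh_IS}. The gap is the transport step. You claim $\E_q[|\boldsymbol X|^2]<\infty$ yields $|\psi_j(x)|\le\gamma_1 e^{\gamma_2 x^2}$ and $\psi_j'(x)\le\gamma_3 e^{\gamma_2 x^2}$ with constants controlled by the second moment; but this is not how the second moment acts. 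Heavier tails on $q_j$ (even Cauchy, with no second moment) make $\psi_j,\psi_j'$ grow \emph{faster}, yet your product stays bounded because the double-exponential $e^{-(1/2-A)\psi_j(x)^2}$ still wins. Conversely, a smooth $q_j$ with finite second moment but a narrow deep valley at some fixed $z_0$ has $G_j(z_0)$ essentially unaffected while $q_j(z_0)$ is tiny, so $(1+|z_0|)\,\varphi_1(\Phi^{-1}(G_j(z_0)))\,e^{-(1/2-A)z_0^2}/q_j(z_0)$ can be made arbitrarily large; the boundedness of $\partial^{\boldsymbol u}F$ therefore does not follow from the stated hypotheses alone. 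The $q$-space projection sidesteps this entirely, since there the second moment enters only through the single scalar $\E_q[|\boldsymbol X|^2]$, with no pointwise control on $q_j$ required.
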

A fundamental limitation of Proposition \ref{prop:ProjectionForIS_A<1/2} is that it requires $A< \frac{1}{2}$ in the condition \cref{term:GrowthCondition_IS}.
Due to this limitation, Proposition \ref{prop:ProjectionForIS_A<1/2} is not applicable to the ODIS. For instance, when applying ODIS to the constant function \( f \equiv 1 \), the optimal proposal distribution given in \cref{ODIS_proposal} reduces to the original standard normal distribution \( q(\boldsymbol{x}) = \varphi(\boldsymbol{x}) \). In this case, the function $f/g$ in the growth condition \cref{term:GrowthCondition_IS} is
\begin{equation}\label{term:ExampleForM}
\frac{f(\boldsymbol{x})}{q(\boldsymbol{x})} = \frac{1}{\varphi(\boldsymbol{x})} \propto e^{\frac{1}{2}|\boldsymbol{x}|^2},
\end{equation}
which exhibits an exponential growth rate with \( A= 1/2 \) . This violates the condition \( A < 1/2 \) required by Proposition \ref{prop:ProjectionForIS_A<1/2}.

Furthermore, even the growth condition \cref{term:GrowthCondition_IS} with \( A = 1/2 \)  would be insufficient. The partial derivative of \( e^{\frac{1}{2}|\boldsymbol{x}|^2} \) contains a term \( x_j e^{\frac{1}{2}|\boldsymbol{x}|^2} \) for some $j \in 1{:}d$, which grows faster than \( e^{\frac{1}{2}|\boldsymbol{x}|^2} \) and cannot be satisfied the condition in \cref{term:GrowthCondition_IS} with $A = 1/2$. This highlights a fundamental limitation of the existing theoretical framework in handling ODIS.

\section{RQMC-IS Convergence under a Generalized Growth Condition}\label{sec:MainResults}

To address the theoretical limitations discussed above, we now develop a new convergence theory for RQMC-IS. Our framework is built upon a more general growth condition that accommodates the critical case where the integrand and its derivatives exhibit a growth proportional to \(e^{\frac{1}{2}|\boldsymbol{x}|^2}\).

We consider the case where \( k = 2 \) and define a growth condition on the function \(f/q\). We assume that there exist constants \(A,B,C>0\) such that for all \( \boldsymbol{x} \in \mathbb{R}^d \),
\begin{equation}
\sup_{\boldsymbol{u} \subseteq 1:d} |\partial^{\boldsymbol{u}} (f(\boldsymbol{x})/q(\boldsymbol{x}))| \leq  Ce^{A|\boldsymbol{x}|^2+B|\boldsymbol{x}|}. \label{term:GrowthCondition_Extended}
\end{equation}

To present our results, we introduce two possible conditions on the proposal density \(q\) as follows.
\begin{enumerate}[label = \textbf{Condition}~\Roman*, leftmargin=*] 
  \item 
  The proposal \(q\) has a finite second moment, i.e., \(\E_q[|\boldsymbol{X}|^2] < \infty \). \label{condition:a}
   \item \textbf{(Light-Tailed Condition)} 
   The proposal \(q\) has tails that decay sufficiently fast. Specifically, for the case \(A=1/2\) and $B$ in \cref{term:GrowthCondition_Extended}, there exist constants $\alpha>0, \beta>0, \gamma>1$ and $C_0>0$, such that for any $R>1$,
    \[ \mathbb{E}_q \left[|\boldsymbol{X}|^4 e^{2B|\boldsymbol{X}|} \mathbb{I}_{\{|\boldsymbol{X}| \geq R - 1\}} \right] \leq C_0 R^{\alpha} e^{-\beta R^\gamma}. \] \label{condition:b}
\end{enumerate}
\begin{remark}
    \ref{condition:a} is trivial and easily satisfied. When the growth rate of $f/g$ and its derivatives do not grow so fast, it only requires that $q$ satisfies the existence of the second moment. However, when $f/g$ and its derivatives grow too fast and approach the critical state, we need \ref{condition:b}, i.e., the light-tailed condition of the density, to be satisfied in order to obtain our convergence conclusion. This light-tailed condition can be regarded as a generalization of the normal density, and we will also prove that the ODIS proposal \cref{ODIS_proposal} satisfies this condition in Theorem \ref{prop:ODIS}.

\end{remark}
Now we present our main convergence results for the RQMC-IS estimator.

\begin{theorem}\label{thm:MainTheorem}
Assume that \(f/q \in \mathcal{S}(\mathbb{R}^d) \) satisfies the growth condition \cref{term:GrowthCondition_Extended} and that the proposal $q$ satisfies one of the above conditions (\ref{condition:a} or \ref{condition:b}), then the following convergence results for the RQMC-IS estimator $\widehat{I}^{IS}_n(f)$ hold.
\begin{enumerate}[label=(\alph*)]
    \item When $A < \frac{1}{2}$, if $q$ satisfies the \ref{condition:a}, we have
    \begin{equation*}
    \mathbb{E} \left[ ( \widehat{I}_n^{IS}(f) - \mathbb{E} \left[ f(\boldsymbol{X}) \right] )^2 \right] = O \left( n^{-2} r_1(n) \right),
    \end{equation*}
    where $r_1(n) = (\log n)^{3d-2}$.
    \item When $A = \frac{1}{2}$, if $q$ satisfies the \ref{condition:b}, we have 
    \begin{equation*}
    \mathbb{E} \left[ ( \widehat{I}_n^{IS}(f) - \mathbb{E} \left[ f(\boldsymbol{X}) \right] )^2 \right] = O \left( n^{-2} r_2(n) \right),
    \end{equation*}
    where $r_2(n) = (\log n)^{4d/\gamma + 2d - 2}e^{\sqrt{d}B(2/\beta)^{1/\gamma} (\log n)^{1/\gamma}}$.
\end{enumerate}
\end{theorem}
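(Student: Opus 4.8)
The plan is to transfer everything to the unit cube, observe that importance sampling converts the rapid growth of $f/q$ into decay (for $A<\tfrac12$) or into much milder growth (for $A=\tfrac12$) of the effective integrand $f_{IS}=\varphi\cdot(f/q)$, and then combine a low-discrepancy estimate on a truncated sub-box with a moment estimate on its complement. Write $g:=f_{IS}\circ G^{-1}$ on $[0,1)^d$, so that $\widehat I_n^{IS}(f)=\tfrac1n\sum_{j}g(\boldsymbol y_j)$ and $\E[f(\boldsymbol X)]=\int_{[0,1)^d}g$. Since $\varphi(\boldsymbol x)=\prod_i(2\pi)^{-1/2}e^{-x_i^2/2}$, each mixed partial satisfies $\partial^{\boldsymbol w}\varphi(\boldsymbol x)=(-1)^{|\boldsymbol w|}\big(\prod_{j\in\boldsymbol w}x_j\big)\varphi(\boldsymbol x)$; applying the Leibniz rule to $f_{IS}=\varphi\cdot(f/q)$, using \cref{term:GrowthCondition_Extended}, and noting that $\varphi(\boldsymbol x)e^{|\boldsymbol x|^2/2}$ is constant, we obtain, for all $\boldsymbol u\subseteq 1{:}d$,
\begin{equation*}
|\partial^{\boldsymbol u}f_{IS}(\boldsymbol x)|\le C_d\,(1+|\boldsymbol x|)^{d}\,e^{(A-\frac12)|\boldsymbol x|^2+B|\boldsymbol x|},\qquad \boldsymbol x\in\mathbb R^d .
\end{equation*}
Finally, since $q=\prod_jq_j$ and $(G_j^{-1})'=1/q_j$, the chain rule gives $\partial^{\boldsymbol u}g(\boldsymbol y)=(\partial^{\boldsymbol u}f_{IS})(G^{-1}(\boldsymbol y))\prod_{j\in\boldsymbol u}\big(q_j(G_j^{-1}(y_j))\big)^{-1}$, and the key structural point is that the change of variables $\boldsymbol y_{\boldsymbol u}\mapsto\boldsymbol x_{\boldsymbol u}$ has Jacobian $\prod_{j\in\boldsymbol u}q_j$, which exactly cancels those denominators.

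Part (a) ($A<\tfrac12$) reduces to \cref{prop:ProjectionForIS_A<1/2}. Picking any $A'\in(A,\tfrac12)$ we have $e^{A|\boldsymbol x|^2+B|\boldsymbol x|}\le e^{c_{A'}}e^{A'|\boldsymbol x|^2}$ for a constant $c_{A'}=c_{A'}(A,A',B)$, so $f/q$ already satisfies the growth condition \cref{term:GrowthCondition_IS} with the admissible exponent $A'<\tfrac12$, while \ref{condition:a} is precisely the hypothesis $\E_q[|\boldsymbol X|^2]<\infty$ there (the assumption $f\in\mathcal S(\mathbb R^d)$ in \cref{prop:ProjectionForIS_A<1/2} being used only through $f_{IS}=\varphi\cdot(f/q)\in\mathcal S(\mathbb R^d)$). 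Invoking it yields $\E\big[(\widehat I_n^{IS}(f)-\E[f(\boldsymbol X)])^2\big]=O(n^{-2}(\log n)^{3d-2})$, i.e.\ the claim with $r_1(n)=(\log n)^{3d-2}$.

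Part (b) ($A=\tfrac12$) is where the new argument is required, since $f_{IS}$ now only obeys $|\partial^{\boldsymbol u}f_{IS}(\boldsymbol x)|\le C_d(1+|\boldsymbol x|)^de^{B|\boldsymbol x|}$ and $g$ is genuinely unbounded. Fix a truncation level $M=M(n)$ and a smooth cut-off $\chi_M$ on $[0,1)^d$ that equals $1$ on the box $K_M:=\prod_j[G_j(-M),G_j(M)]$ (i.e.\ on $\{|G^{-1}(\boldsymbol y)|_\infty\le M\}$), vanishes outside $K_{M+1}$, and has derivatives bounded in the $\boldsymbol x$-coordinates. Decompose $\widehat I_n^{IS}(f)-\int g=\big(\widehat I_n(g\chi_M)-\int g\chi_M\big)+\big(\widehat I_n(g(1-\chi_M))-\int g(1-\chi_M)\big)$. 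For the interior term, $g\chi_M$ is smooth, bounded and supported in $K_{M+1}$; bounding its Hardy--Krause variation $V_M$ by $\sum_{\boldsymbol u}\int|\partial^{\boldsymbol u}(g\chi_M)|$, expanding by Leibniz, using the Jacobian cancellation above together with $|\boldsymbol x_{\boldsymbol u}|\le\sqrt d\,M$ on the relevant box, one obtains a bound of the form $V_M\le C_d\,\mathrm{poly}(M)\,e^{\sqrt d\,BM}$; the Koksma--Hlawka inequality and the a.s.\ bound $D_n^*(\{\boldsymbol y_j\})\le C(\log n)^{d-1}/n$ then give $\E\big[(\widehat I_n(g\chi_M)-\int g\chi_M)^2\big]\le C^2V_M^2(\log n)^{2d-2}/n^2$. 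For the tail term, since each $\boldsymbol y_j\sim U[0,1]^d$ and $(\sum_j a_j)^2\le n\sum_j a_j^2$, it is $\le 4\int_{K_M^c}g^2$; changing variables, using $f_{IS}^2=\varphi^2(f/q)^2\le C^2(2\pi)^{-d}e^{2B|\boldsymbol x|}$ (the Gaussian exactly cancelling the critical quadratic), and then $|\boldsymbol X|^4\ge 1$ on $\{|\boldsymbol X|\ge M-1\}$ together with \ref{condition:b} applied with $R=M$, gives $\int_{K_M^c}g^2\le C'\,\E_q\big[|\boldsymbol X|^4e^{2B|\boldsymbol X|}\mathbb I_{\{|\boldsymbol X|\ge M-1\}}\big]\le C'C_0\,M^{\alpha}e^{-\beta M^\gamma}$.

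It then remains to choose $M=M(n)$: taking $M=(2\beta^{-1}\log n)^{1/\gamma}$ makes $e^{-\beta M^\gamma}=n^{-2}$, so the tail contributes $O(n^{-2}(\log n)^{\alpha/\gamma})$, while the interior contributes $O\big(n^{-2}(\log n)^{2d-2}\mathrm{poly}(M)\,e^{c\sqrt d\,B(2/\beta)^{1/\gamma}(\log n)^{1/\gamma}}\big)$ for a fixed constant $c$; both are of the form $n^{-2}\cdot(\text{powers of }\log n)\cdot\exp\{O((\log n)^{1/\gamma})\}$, and collecting the polynomial and exponential factors produces exactly the weight $r_2(n)=(\log n)^{4d/\gamma+2d-2}e^{\sqrt d\,B(2/\beta)^{1/\gamma}(\log n)^{1/\gamma}}$. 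The main obstacle is the interior estimate: one must verify that the Hardy--Krause variation of the truncated integrand blows up no faster than $\mathrm{poly}(M)e^{O(M)}$, which relies on the Jacobian cancellation and on a careful accounting of the boundary/cut-off terms introduced by the truncation, and then balance this blow-up against the tail. The balance succeeds precisely because \ref{condition:b} supplies a \emph{stretched}-exponential tail $e^{-\beta R^\gamma}$ with $\gamma>1$: this forces only $M\asymp(\log n)^{1/\gamma}$ to push the tail down to the $n^{-2}$ level, and at that scale the variation blow-up $e^{O(M)}=e^{O((\log n)^{1/\gamma})}=n^{o(1)}$ is harmless; for $\gamma\le 1$ (a merely exponential tail) this trade-off would break down and the $A=\tfrac12$ barrier would persist.
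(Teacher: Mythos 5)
Your proposal is correct and, at its core, follows the same strategy as the paper: part (a) is the identical reduction to Proposition \ref{prop:ProjectionForIS_A<1/2} by absorbing $e^{B|\boldsymbol{x}|}$ into $e^{(A+\epsilon)|\boldsymbol{x}|^2}$, and part (b) is the same truncate/Koksma--Hlawka/balance argument with truncation level $R\asymp(\log n)^{1/\gamma}$, chosen so that the stretched-exponential tail in \ref{condition:b} is pushed to the $n^{-2}$ level while the variation blow-up $e^{O(R)}$ remains $n^{o(1)}$ (your closing remark that $\gamma>1$ is what makes this balance work matches the paper's reasoning). The genuine difference is the truncation device: the paper composes $f_{IS}$ with the smooth clamping map $P_R$ and splits the MSE into the RQMC error of $f_{IS}\circ P_R\circ G^{-1}$ (variation bounded in Lemma \ref{lemma:V_HK}) plus two truncation terms controlled via the mean value theorem and the gradient bound of Lemma \ref{lemma:DerivativeOfh_IS}, which is precisely where the weight $|\boldsymbol{X}|^4e^{2B|\boldsymbol{X}|}$ in \ref{condition:b} arises (Lemma \ref{lemma:ProjectionError}); you instead multiply by a smooth cut-off $\chi_M$ and bound the tail part directly by $4\int_{K_M^c}g^2$ using only the function-level bound $f_{IS}^2\le C^2(2\pi)^{-d}e^{2B|\boldsymbol{x}|}$, inserting $|\boldsymbol{X}|^4\ge 1$ to invoke \ref{condition:b}. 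Your tail estimate is thus slightly more elementary (no mean value theorem, and the fourth-moment weight in \ref{condition:b} is not really needed), at the cost of having to control the Hardy--Krause variation of the product $g\chi_M$, which you assert as $\mathrm{poly}(M)e^{\sqrt{d}BM}$ rather than prove; the verification does go through by the same Leibniz plus Jacobian-cancellation computation as in Lemma \ref{lemma:V_HK}, since the cut-off has $x$-derivatives bounded uniformly in $M$ on a transition layer of fixed width, so this is an acceptable gap of detail rather than of substance. The one loose point is the final bookkeeping: you leave the constant $c$ in the exponent of the interior contribution unspecified and then claim the exact weight $r_2(n)$; squaring the variation bound actually gives $\exp\{2\sqrt{d}B(2/\beta)^{1/\gamma}(\log n)^{1/\gamma}\}$, which is also what the paper's own bound on $T_2$ produces, so this factor-of-two discrepancy with the stated $r_2(n)$ is inherited from the paper itself and not a defect specific to your argument.
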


Before the proof of Theorem \ref{thm:MainTheorem}, we introduce two key lemmas. Their proofs are deferred to the Appendix \ref{sec:Appendix} to maintain the flow of the main argument.

\begin{lemma}\label{lemma:V_HK}
Assume that \( f/q \in \mathcal{S}(\mathbb{R}^d)\) and that there exists \(B,C>0\) such that
\begin{equation}
\sup_{\boldsymbol{u} \subseteq 1:d} |\partial^{\boldsymbol{u}} \frac{f}{q}(\x)| \leq  Ce^{\frac{1}{2}|\x|^2+B|\boldsymbol{x}|} \text{ for all } \x \in \mathbb{R}^d. \notag
\end{equation}
Define a projection operator $\hat{P_R}: \mathbb{R} \to \mathbb{R}$ by 
\begin{equation*}\label{eq:smoothprojection}
        \hat{P_R}(x) = \left\{\begin{array}{ll}
            -R+\frac{1}{2}, \ &\ x\in [-\infty,-R]\\
            \frac{1}{2}x^2 + Rx +\frac{(R-1)^2}{2},
            \ &\ x\in(-R,-R+1)\\
            x,\ &\ x\in [-R+1,R-1]\\
            -\frac{1}{2}x^2 + Rx - \frac{(R-1)^2}{2},\ &\ x\in (R-1,R)\\
            R-\frac{1}{2},\ &\ x\in [R,\infty]
        \end{array}\right. 
    \end{equation*}
and define $P_R: \mathbb{R}^d \to \mathbb{R}^d$ as $P_R(\x) = (\hat{P_R}(x_1),\dots,\hat{P_R}(x_d))$.
Then for any \(R>1\), the Hardy-Krause variation \cite{niederreiter1992} of \( f_{IS} \circ P_R \circ G^{-1} \) is bounded by
\[
V_{HK}(f_{IS} \circ P_R \circ G^{-1}) \leq B (2 \sqrt{d} + 3)^d R^{2d} e^{\sqrt{d} B R},
\]
\end{lemma}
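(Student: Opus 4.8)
\textbf{Proof proposal for Lemma \ref{lemma:V_HK}.}

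The plan is to bound the Hardy--Krause variation by the standard integral formula $V_{HK}(F) = \sum_{\emptyset \neq \boldsymbol{u} \subseteq 1{:}d} \int_{[0,1)^{|\boldsymbol{u}|}} |\partial^{\boldsymbol{u}} F(\boldsymbol{y}_{\boldsymbol{u}};\boldsymbol{1})|\,\mathrm{d}\boldsymbol{y}_{\boldsymbol{u}}$ applied to $F = f_{IS}\circ P_R \circ G^{-1}$, and to control each mixed partial by the chain rule. First I would write $f_{IS} = (f/q)\cdot(\varphi/q)$... wait, more precisely $f_{IS} = \varphi f/q$, so I should track that $f_{IS} = \varphi \cdot (f/q)$; but since $\varphi(\x) = (2\pi)^{-d/2}e^{-|\x|^2/2}$ is itself smooth with controllable derivatives, the growth hypothesis on $f/q$ together with the Gaussian factor should give $\sup_{\boldsymbol{u}}|\partial^{\boldsymbol{u}} f_{IS}(\x)| \leq C' e^{B|\x|}$ — the $e^{|\x|^2/2}$ and $e^{-|\x|^2/2}$ cancel up to polynomial factors absorbed into the exponential. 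Actually I need to be careful: differentiating the product produces polynomial-in-$\x$ prefactors times $e^{\frac12|\x|^2+B|\x|}$ from the $f/q$ side, but after multiplying by $\varphi$ those become polynomial times $e^{B|\x|}$, and a polynomial is dominated by $e^{\epsilon|\x|}$; the cleanest route is to state a sublemma $\sup_{\boldsymbol{u}}|\partial^{\boldsymbol{u}} f_{IS}(\x)| \leq \tilde C e^{B|\x|}$ (possibly with $B$ slightly enlarged, or keeping $B$ and paying a constant).

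Next, the key structural point is that $P_R$ is a clamping-type map: $\hat P_R$ is $C^1$, equals the identity on $[-R+1,R-1]$, has $|\hat P_R'|\le 1$ everywhere, $|\hat P_R''|\le 1$, and crucially $\hat P_R \equiv \text{const}$ outside $[-R,R]$ so $\hat P_R' = 0$ there. Therefore $\partial^{\boldsymbol{u}}(f_{IS}\circ P_R)(\x)$ is a sum, over ways of distributing the derivatives, of terms $(\partial^{\boldsymbol{v}} f_{IS})(P_R(\x))$ times products of $\hat P_R'(x_j)$ and $\hat P_R''(x_j)$ factors (Faà di Bruno, but here it degenerates nicely because $P_R$ acts coordinatewise and each coordinate has order-$\le 1$ derivatives beyond the second which vanish — each $x_j$ appears once in $\boldsymbol{u}$, so for each $j\in\boldsymbol{u}$ we either apply $\partial_j$ to the outer factor, contributing $\hat P_R'(x_j)$, or... no — since each variable is differentiated exactly once, the chain rule gives exactly $\partial^{\boldsymbol{u}}(f_{IS}\circ P_R)(\x) = (\partial^{\boldsymbol{u}} f_{IS})(P_R(\x))\prod_{j\in\boldsymbol{u}}\hat P_R'(x_j)$). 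So it is genuinely simple: $|\partial^{\boldsymbol{u}}(f_{IS}\circ P_R)(\x)| \le |(\partial^{\boldsymbol{u}} f_{IS})(P_R(\x))| \le \tilde C e^{B|P_R(\x)|}$, and since $|P_R(\x)| \le \sqrt d\, R$ (each coordinate bounded by $R-\tfrac12 < R$ in absolute value... actually $\le R$), this is $\le \tilde C e^{\sqrt d B R}$, a constant in $\x$.

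Then I compose with $G^{-1}$ where $G(\boldsymbol{y}) = (G_1^{-1}(y_1),\dots,G_d^{-1}(y_d))$ acts coordinatewise, so again by the coordinatewise chain rule $\partial^{\boldsymbol{u}}(f_{IS}\circ P_R\circ G^{-1})(\boldsymbol{y}) = [\partial^{\boldsymbol{u}}(f_{IS}\circ P_R)](G^{-1}(\boldsymbol{y}))\prod_{j\in\boldsymbol{u}}(G_j^{-1})'(y_j)$, and $(G_j^{-1})'(y_j) = 1/q_j(G_j^{-1}(y_j))$. Here is the one spot needing real care, and the main obstacle: $(G_j^{-1})'$ can blow up near $y_j \in \{0,1\}$, so the integral $\int_0^1 |(G_j^{-1})'(y_j)|\,\mathrm{d}y_j = \int_{\R} 1\,\mathrm{d}x_j = \infty$ diverges! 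The resolution must be that $P_R$ has already clamped things: on $y_j$ such that $G_j^{-1}(y_j) \notin [-R,R]$ we have $\hat P_R'(G_j^{-1}(y_j)) = 0$, killing the corresponding factor in $\prod_{j\in\boldsymbol{u}}\hat P_R'(x_j)$. So I must keep the $\hat P_R'$ factors alive rather than bounding them by $1$ prematurely: $\partial^{\boldsymbol{u}} F(\boldsymbol{y}) = (\partial^{\boldsymbol{u}} f_{IS})(P_R(G^{-1}(\boldsymbol{y})))\prod_{j\in\boldsymbol{u}} \hat P_R'(G_j^{-1}(y_j))\,(G_j^{-1})'(y_j)$. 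For each $j$, $\int_0^1 |\hat P_R'(G_j^{-1}(y_j))|\,|(G_j^{-1})'(y_j)|\,\mathrm{d}y_j = \int_{\R} |\hat P_R'(t)|\,\mathrm{d}t$ by the change of variables $t = G_j^{-1}(y_j)$, $\mathrm{d}t = (G_j^{-1})'(y_j)\mathrm{d}y_j$; and $\int_\R |\hat P_R'(t)|\,\mathrm{d}t = \int_\R \hat P_R'(t)\,\mathrm{d}t$ since $\hat P_R$ is nondecreasing, which telescopes to $\hat P_R(\infty) - \hat P_R(-\infty) = (R-\tfrac12) - (-R+\tfrac12) = 2R-1 \le 2R$. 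Putting it together via the variation formula: $V_{HK}(F) \le \sum_{\emptyset\neq\boldsymbol{u}} \tilde C e^{\sqrt d B R}\prod_{j\in\boldsymbol{u}}(2R) \le \tilde C e^{\sqrt d B R}\sum_{k=1}^d \binom{d}{k}(2R)^k \le \tilde C e^{\sqrt d B R}(1+2R)^d \le \tilde C (3R)^d e^{\sqrt d B R}$ for $R>1$, wait I want $R^{2d}$ in the stated bound — the extra $R^d$ slack presumably comes from bounding $\sup_{\boldsymbol{u}}|\partial^{\boldsymbol{u}} f_{IS}|$ not just by $\tilde C e^{B|\x|}$ but retaining a polynomial factor $(1+|\x|)^d e^{B|\x|}$ evaluated at $|P_R(\x)| \le \sqrt d R$, giving $(1+\sqrt d R)^d \le (2\sqrt d R)^d$; combined with $(1+2R)^d$ and collecting constants into $B(2\sqrt d + 3)^d$ this matches the claimed $V_{HK}(f_{IS}\circ P_R\circ G^{-1}) \le B(2\sqrt d + 3)^d R^{2d} e^{\sqrt d B R}$. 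The only genuinely delicate point, to be stated carefully, is the interchange of $\sup$ over faces and the handling of lower-dimensional boundary terms $\partial^{\boldsymbol{u}} F(\boldsymbol{y}_{\boldsymbol{u}}; \boldsymbol{1})$ — but since the bound on $|\partial^{\boldsymbol{u}} F|$ I derived is uniform over all of $[0,1)^d$ and in particular holds with any subset of coordinates fixed at $1$, the same estimate applies verbatim to every term in the Hardy--Krause sum.
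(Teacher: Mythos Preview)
Your proposal is correct and follows essentially the same route as the paper's proof: both use the integral formula for $V_{HK}$, apply the coordinatewise chain rule, invoke the bound $|\partial^{\boldsymbol{u}} f_{IS}(\x)| \le C(1+|\x|)^{|\boldsymbol{u}|}e^{B|\x|}$ (the paper's Lemma~\ref{lemma:DerivativeOfh_IS} with $A=1/2$), and exploit the fact that $\hat P_R'$ vanishes outside $[-R,R]$ so that the change of variables $t=G_j^{-1}(y_j)$ converts each one-dimensional integral into a finite quantity of order $2R$. The only cosmetic difference is that the paper bounds $|\hat P_R'(t)|\le \mathbb{I}_{\{|t|\le R\}}$ to get $\int = 2R$, whereas you compute $\int_{\R}|\hat P_R'(t)|\,\mathrm{d}t = 2R-1$ exactly; either yields the same final estimate after summing $\sum_{\boldsymbol{u}}(1+\sqrt{d}R)^{|\boldsymbol{u}|}(2R)^{|\boldsymbol{u}|} = (1+2R(1+\sqrt{d}R))^d \le (2\sqrt{d}+3)^d R^{2d}$.
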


\begin{lemma}\label{lemma:ProjectionError}
If \( f/q \in \mathcal{S}(\mathbb{R}^d)\) and there exist \(B,C>0\) such that
\begin{equation}
\sup_{\boldsymbol{u} \subseteq 1:d} |\partial^{\boldsymbol{u}} \frac{f}{q}(\x)| \leq  Ce^{\frac{1}{2}|\x|^2+B|\boldsymbol{x}|} \text{ for all } \x \in \mathbb{R}^d \notag
\end{equation} 
and \( q \) satisfies the \ref{condition:b}, then for any \( R>1 \), we have
\begin{equation}
\mathbb{E}_q \left| f_{IS}(\boldsymbol{X}) - f_{IS} \circ P_R(\boldsymbol{X}) \right|^2 \leq C_1 R^{\alpha} e^{-\beta R^\gamma}, \notag
\end{equation}
where the constant \(C_1\) is independent of \(R\) and the operator $P_R$ is defined in Lemma \ref{lemma:V_HK}.
\end{lemma}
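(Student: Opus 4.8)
The plan is to exploit the exponential decay of the standard normal density $\varphi$ against the growth hypothesis to obtain a clean pointwise bound $\abs{f_{IS}(\x)}\le C'e^{B\abs{\x}}$, to observe that $f_{IS}-f_{IS}\circ P_R$ is supported on an event whose $q$-probability is controlled by \ref{condition:b}, and then to integrate.

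First I would record two elementary properties of the one-dimensional map $\hat P_R$ from Lemma \ref{lemma:V_HK}: inspecting the explicit piecewise formula, $\hat P_R'$ takes values in $[0,1]$ on every piece, so $\hat P_R$ is nondecreasing and $1$-Lipschitz, and it is odd; consequently $\abs{\hat P_R(x)}\le \abs{x}$ for all $x$, with equality whenever $\abs{x}\le R-1$. Hence the vector map satisfies $\abs{P_R(\x)}\le\abs{\x}$, and $P_R(\x)=\x$ as soon as $\max_j\abs{x_j}\le R-1$; in particular $f_{IS}(\x)-f_{IS}\circ P_R(\x)=0$ off the event $\{\max_j\abs{x_j}>R-1\}\subseteq\{\abs{\x}\ge R-1\}$.

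Next, writing $f_{IS}=\varphi\cdot(f/q)$ and using the $\boldsymbol{u}=\emptyset$ instance of the growth hypothesis, I get
\[
\abs{f_{IS}(\x)}\le (2\pi)^{-d/2}e^{-\frac12\abs{\x}^2}\cdot Ce^{\frac12\abs{\x}^2+B\abs{\x}}=C'e^{B\abs{\x}},\qquad C':=C(2\pi)^{-d/2}.
\]
Applying the same bound at the point $P_R(\x)$ and using $\abs{P_R(\x)}\le\abs{\x}$ yields $\abs{f_{IS}\circ P_R(\x)}\le C'e^{B\abs{P_R(\x)}}\le C'e^{B\abs{\x}}$. Combining this with the support observation,
\[
\abs{f_{IS}(\x)-f_{IS}\circ P_R(\x)}^2\le 4C'^2\,e^{2B\abs{\x}}\,\mathbb{I}_{\{\abs{\x}\ge R-1\}}.
\]
Taking $\E_q$, it remains to bound $\E_q\!\left[e^{2B\abs{\boldsymbol{X}}}\mathbb{I}_{\{\abs{\boldsymbol{X}}\ge R-1\}}\right]$ via \ref{condition:b}. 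For $R\ge2$ one has $\abs{\boldsymbol{X}}\ge R-1\ge1$ on the relevant event, so $e^{2B\abs{\boldsymbol{X}}}\mathbb{I}_{\{\abs{\boldsymbol{X}}\ge R-1\}}\le\abs{\boldsymbol{X}}^4e^{2B\abs{\boldsymbol{X}}}\mathbb{I}_{\{\abs{\boldsymbol{X}}\ge R-1\}}$ and \ref{condition:b} gives the stated $O(R^\alpha e^{-\beta R^\gamma})$ bound; for $1<R<2$ the left side is at most the finite constant $e^{2B}+\E_q[\abs{\boldsymbol{X}}^4e^{2B\abs{\boldsymbol{X}}}]<\infty$ (finiteness again from \ref{condition:b} letting $R\downarrow1$), which is absorbed into $C_1$ because $R^\alpha e^{-\beta R^\gamma}$ is bounded below by a positive constant on $[1,2]$. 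Taking $C_1$ to be the larger of the two resulting constants completes the argument.

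I do not anticipate a serious obstacle; the only points demanding care are the contraction property $\abs{\hat P_R(x)}\le\abs{x}$ (a brief case check against the piecewise definition) and matching the weight $\abs{\boldsymbol{X}}^4$ appearing in \ref{condition:b} with the weight $1$ actually needed, which is handled by the small-$R$ carve-out above.
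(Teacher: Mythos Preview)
Your proof is correct, but it takes a genuinely different and more elementary route than the paper's. The paper applies the mean value theorem, writing $\abs{f_{IS}(\x)-f_{IS}\circ P_R(\x)}\le\abs{\nabla f_{IS}(\bm\xi_{\x})}\cdot\abs{\x-P_R(\x)}$, and then invokes the auxiliary derivative bound $\abs{\partial^{\boldsymbol u}f_{IS}(\x)}\le C(1+\abs{\x})^{\abs{\boldsymbol u}}e^{B\abs{\x}}$ (proved separately via the Leibniz rule) to control the gradient; this naturally produces a factor $(1+\abs{\x})^2\abs{\x}^2$ that matches the weight $\abs{\boldsymbol X}^4$ in \ref{condition:b}. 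You instead bypass both the mean value theorem and the derivative lemma entirely: using only the $\boldsymbol u=\emptyset$ instance of the growth hypothesis together with the contraction $\abs{P_R(\x)}\le\abs{\x}$, you bound each of $\abs{f_{IS}(\x)}$ and $\abs{f_{IS}\circ P_R(\x)}$ directly by $C'e^{B\abs{\x}}$ and apply the triangle inequality. The price is that your bound lacks the polynomial prefactor, so you must artificially insert $\abs{\boldsymbol X}^4\ge 1$ on the event $\abs{\boldsymbol X}\ge R-1\ge 1$ to invoke \ref{condition:b}; the paper's bound matches \ref{condition:b} without this trick. Your argument is cleaner in that it reveals the derivative hypotheses on $f/q$ are not actually needed for this particular lemma. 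One small slip: to conclude $\E_q[\abs{\boldsymbol X}^4e^{2B\abs{\boldsymbol X}}]<\infty$ you should take $R=2$ (or any fixed $R>1$) in \ref{condition:b}, not $R\downarrow 1$, though the conclusion is the same either way.
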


With the help of Lemma \ref{lemma:V_HK} and Lemma \ref{lemma:ProjectionError}, we may prove the theorem.
\begin{proof}[Proof of Theorem \ref{thm:MainTheorem}]
(a) For the case $A < 1/2$, we just need to choose an \(\epsilon > 0 \) small enough such that \(A+\epsilon < \frac{1}{2}\).  Then there exists a constant $C'$ for which
$
Ce^{A|\boldsymbol{x}|^2+B|\boldsymbol{x}|} \leq C'e^{(A+\epsilon)|\x|^2}.
$
This satisfies the condition of Proposition \ref{prop:ProjectionForIS_A<1/2} with $k=2$. Under the \ref{condition:a} ($\E_q[|\boldsymbol{X}|^2] < \infty$), the claimed error rate follows directly by Proposition \ref{prop:ProjectionForIS_A<1/2}.

(b) For the case $A = 1/2$, we have
\begin{align*}
\mathbb{E}\left[\left|\hat{I}_n^{IS}(f) - \mathbb{E}[f(\boldsymbol{X})]\right|^2\right] 
&\leq 3\left\{\mathbb{E}\left[\left|\hat{I}_n^{IS}(f) - \hat{I}_n^{IS, R}(f)\right|^2\right] + \mathbb{E}\left[\left|\hat{I}_n^{IS, R}(f) - \mathbb{E}_q\left[f_{IS}\circ P_R(\boldsymbol{X})\right]\right|^2\right] \right. \\
&\quad \left. + \left|\mathbb{E}_q\left[f_{IS}\circ P_R(\boldsymbol{X})\right] - \mathbb{E}_q\left[f_{IS}(\boldsymbol{X})\right]\right|^2\right\},
\end{align*}
where 
\begin{equation}
    \widehat I_n^{IS,R}(f) := \frac{1}{n} \sum_{j = 1}^n f_{IS}\circ P_R \circ G^{-1} (\boldsymbol{y}_j)\notag .
\end{equation}.

By the Cauchy inequality
\begin{align*}
\mathbb{E}\left[\left|\hat{I}_n^{IS}(f) - \hat{I}_n^{IS, R}(f)\right|^2\right]
& = \mathbb{E}_q\left[\left|\frac{1}{n}\sum_{j=1}^n\left[f\left(\boldsymbol{X}_j\right) - f \circ P_R\left(\boldsymbol{X}_j\right)\right]\right|^2\right] \\
& \leq \mathbb{E}_q\left[\left|f_{IS}(\boldsymbol{X}) - f \circ P_R(\boldsymbol{X})\right|^2\right].
\end{align*}

Moreover, we have 
\begin{align*}
\left|\mathbb{E}_q\left[f_{IS}\circ P_R(\boldsymbol{X})\right] - \mathbb{E}_q\left[f_{IS}(\boldsymbol{X})\right]\right|^2 \leq \mathbb{E}_q\left[\left|f_{IS}(\boldsymbol{X}) - f\circ P_R(\boldsymbol{X})\right|^2\right].
\end{align*}

By the Koksma-Hlawka inequality
\begin{align*}
\mathbb{E}\left[\left|\hat{I}_n^{IS, R}(f) - \mathbb{E}_q\left[f_{IS}\circ P_R(\boldsymbol{X})\right]\right|^2\right] \leq \left(V_{HK}\left(f_{IS}\circ P_R\circ G^{-1}\right) \cdot C\frac{(\log n)^{d-1}}{n}\right)^2.
\end{align*}

Thus we conclude that
\begin{align*}
& \mathbb{E}\left[\left|\hat{I}_n^{IS}(f) - \mathbb{E}[f(\boldsymbol{X})]\right|^2\right] \\
\leq & 6\mathbb{E}_q\left[\left|f_{IS}(\boldsymbol{X}) - f\circ P_R(\boldsymbol{X})\right|^2\right] + 3 C^2\left(V_{HK}\left(f_{IS}\circ P_R\circ G^{-1}\right) \cdot \frac{(\log n)^{d-1}}{n}\right)^2 \\
=: & T_1 + T_2.
\end{align*}

Using Lemma \ref{lemma:V_HK} and Lemma \ref{lemma:ProjectionError}, we can bound these two terms. For the first term $T_1$, we have
$
T_1 \le C_1 R^{\alpha} e^{-\beta R^\gamma}
$
by Lemma \ref{lemma:ProjectionError}. For the second term $T_2$, we obtain that
$
T_2 \le C_2 \frac{(\log n)^{2d-2}}{n^2} R^{4d} e^{2\sqrt{d} B R}
$
for some constant $C_2$ by Lemma \ref{lemma:V_HK}. Thus we have
\[
\mathbb{E}\left[\left|\hat{I}_n^{IS}(f) - \hat{I}_n^{IS, R}(f)\right|^2\right] \le C_1 R^{\alpha} e^{-\beta R^\gamma} + C_2 \frac{(\log n)^{2d-2}}{n^2} R^{4d} e^{2\sqrt{d} B R}.
\]
We set 
\[
R = \left(\frac{2 \log n}{\beta}\right)^{1/\gamma}.
\]
With this choice of $R$, the first term $T_1$ can be bounded as
\[
T_1 \le C_1 \left(\frac{2 \log n}{\beta}\right)^{\alpha/\gamma} e^{-2\log n} = C_1 \left(\frac{2}{\beta}\right)^{\alpha/\gamma} \frac{(\log n)^{\alpha/\gamma}}{n^2}.
\]
The second term $T_2$ can be bounded as
\begin{align*}
T_2 &\le C_2 \frac{(\log n)^{2d-2}}{n^2} \left(\frac{2 \log n}{\beta}\right)^{4d/\gamma} \exp\left(2\sqrt{d} B \left(\frac{2 \log n}{\beta}\right)^{1/\gamma}\right) \\
&= \frac{C_2(2/\beta)^{4d/\gamma}}{n^2} (\log n)^{2d-2+4d/\gamma} \exp\left(2\sqrt{d}B(2/\beta)^{1/\gamma} (\log n)^{1/\gamma}\right).
\end{align*}
Since $\gamma > 1$, the exponential factor $\exp(C_4 (\log n)^{1/\gamma})$ grows slower than any polynomial in $n$, but faster than any polylogarithmic term. Therefore, the second term $T_2$ dominates the error. The overall error is determined by the asymptotic behavior of $T_2$, which gives the rate as stated in the theorem.
\end{proof}

Having established our main convergence results, we now demonstrate their applicability to the ODIS method introduced in the previous section.

\begin{theorem}\label{prop:ODIS}
If \(q\) has the form
\begin{equation}
q(\boldsymbol{z}) = (2\pi)^{-d/2} \exp \left\{ -\frac{1}{2} (\boldsymbol{z} - \boldsymbol{\mu})^T (\boldsymbol{z} - \boldsymbol{\mu}) \right\}, \notag
\end{equation}
where  \(\bm \mu \in \mathbb{R}^d\), then \(q\) satisfies the \ref{condition:b}. Furthermore, if \(\frac{f}{q} \in \mathcal{S}(\mathbb{R}^d) \) and satisfies the growth condition \cref{term:GrowthCondition_Extended}, then for the RQMC-IS estimator  we have 
    \begin{equation}\label{eq:ODIS_error}
    \mathbb{E} \left[ ( \widehat{I}_n^{IS}(f) - \mathbb{E} \left[ f(\boldsymbol{X}) \right] )^2 \right] = O \left( n^{-2+\epsilon} \right).
    \end{equation}
\end{theorem}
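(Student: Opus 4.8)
The plan is to prove the two assertions of the theorem separately: first that every shifted Gaussian proposal $q(\z)=\varphi(\z-\bm\mu)$ satisfies \ref{condition:b}, and then to feed this into Theorem~\ref{thm:MainTheorem} and simplify the resulting rate down to $O(n^{-2+\epsilon})$.

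\textbf{Step 1 (the light-tailed condition).} Writing $\boldsymbol{X}=\bm\mu+\boldsymbol{Z}$ with $\boldsymbol{Z}\sim N(\bm 0,\boldsymbol I_d)$ under $q$, we have
\[
\E_q\!\left[\abs{\boldsymbol{X}}^4 e^{2B\abs{\boldsymbol{X}}}\mathbb{I}_{\{\abs{\boldsymbol{X}}\ge R-1\}}\right]
=(2\pi)^{-d/2}\int_{\abs{\x}\ge R-1}\abs{\x}^4 e^{2B\abs{\x}}\,e^{-\abs{\x-\bm\mu}^2/2}\,\mathrm{d}\x .
\]
Using $\abs{\x-\bm\mu}^2\ge(\abs{\x}-\abs{\bm\mu})^2\ge\abs{\x}^2-2\abs{\bm\mu}\abs{\x}$ and completing the square, $-\tfrac12\abs{\x}^2+(2B+\abs{\bm\mu})\abs{\x}\le-\tfrac14\abs{\x}^2+(2B+\abs{\bm\mu})^2$, so in polar coordinates the right-hand side is at most $C'\int_{R-1}^{\infty}\rho^{\,d+3}e^{-\rho^2/4}\,\mathrm{d}\rho$ for some $C'=C'(d,B,\bm\mu)$. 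Splitting $e^{-\rho^2/4}=e^{-\rho^2/8}e^{-\rho^2/8}$, bounding the first factor by $e^{-(R-1)^2/8}$ on the range of integration, and using the convergent integral $\int_0^\infty\rho^{\,d+3}e^{-\rho^2/8}\,\mathrm{d}\rho$, we obtain a bound of the form $C''e^{-(R-1)^2/8}$. Since $(R-1)^2\ge R^2/4$ for $R\ge 2$, and the quantity is trivially controlled on $1<R<2$, after enlarging the constant \ref{condition:b} holds with $\alpha=1$, $\beta=1/32$ and $\gamma=2$, for the fixed $B$ of \cref{term:GrowthCondition_Extended}.

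\textbf{Step 2 (applying the main theorem and simplifying).} Suppose $f/q\in\mathcal S(\R^d)$ satisfies \cref{term:GrowthCondition_Extended} with constants $A\le\tfrac12$ and $B$. If $A<\tfrac12$, the Gaussian $q$ trivially satisfies \ref{condition:a}, and part~(a) of Theorem~\ref{thm:MainTheorem} gives $O(n^{-2}(\log n)^{3d-2})=O(n^{-2+\epsilon})$. If $A=\tfrac12$, Step~1 supplies \ref{condition:b} with $\gamma=2$, so part~(b) gives $O(n^{-2}r_2(n))$ with $r_2(n)=(\log n)^{4d-2}\exp\{\sqrt d\,B(2/\beta)^{1/2}(\log n)^{1/2}\}$. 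Because $\gamma=2>1$, we have $(\log n)^{1/\gamma}=(\log n)^{1/2}=o(\log n)$, so both $(\log n)^{4d-2}$ and $\exp\{\,\sqrt d\,B(2/\beta)^{1/2}(\log n)^{1/2}\,\}$ grow slower than any positive power of $n$; hence $r_2(n)=O(n^{\epsilon})$ for every $\epsilon>0$, and $O(n^{-2}r_2(n))=O(n^{-2+\epsilon})$, which is \cref{eq:ODIS_error}.

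The bookkeeping of Step~2 and the fact that a $\gamma>1$ in \ref{condition:b} costs only an arbitrarily small power of $n$ are routine. The one place that will demand care is the tail estimate in Step~1: one must verify that the linear-in-$\abs{\x}$ growth $e^{2B\abs{\x}}$, together with the extra linear term $e^{\abs{\bm\mu}\abs{\x}}$ created when $\abs{\x-\bm\mu}^2$ is replaced by $\abs{\x}^2$, is fully absorbed into a fraction of the Gaussian exponent, so that genuine $e^{-cR^2}$ decay survives and $\gamma=2$ can legitimately be claimed; the boundary regime $1<R<2$ is dealt with simply by enlarging $C_0$.
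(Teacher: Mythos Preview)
Your proposal is correct and follows essentially the same route as the paper: absorb the linear-in-$|\x|$ factors $e^{2B|\x|}$ and $e^{|\bm\mu|\,|\x|}$ into the Gaussian exponent to obtain $e^{-cR^2}$ tail decay (hence $\gamma=2$ in \ref{condition:b}), and then invoke Theorem~\ref{thm:MainTheorem}. If anything, your write-up is more careful than the paper's---you complete the square globally rather than restricting to large $R$, you treat the boundary regime $1<R<2$ explicitly, and you spell out why $r_2(n)=O(n^{\epsilon})$ when $\gamma=2$, all of which the paper leaves implicit.
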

\begin{proof}
For any $B>0$, consider the integral
\[
\int_{\mathbb{R}^d \setminus H} |\boldsymbol{x}|^4 \exp\{2B|\boldsymbol{x}|\} q(\boldsymbol{x}) \mathrm{d}\boldsymbol{x}.
\]
where  $H = B(0, R-1) := \left\{ \x : |\x | < R-1 \right\}$.

For $R > 4(|\mu| + 2B) + 1$, we have
\[
|\boldsymbol{x}| \geq R - 1 \implies |\boldsymbol{x} - \boldsymbol{\mu}| \geq R - 1 - |\boldsymbol{\mu}|.
\]
Thus
\[
\exp \left( -\frac{1}{2}|\boldsymbol{x} - \boldsymbol{\mu}|^2 \right) \leq \exp \left( -\frac{1}{2}(R - 1 - |\boldsymbol{\mu}|)^2 \right).
\]
Therefore,
\[
\int_{\mathbb{R}^d \setminus H} |\boldsymbol{x}|^4 \exp\{2B|\boldsymbol{x}|\} q(\boldsymbol{x}) \mathrm{d}\boldsymbol{x} \leq (2\pi)^{-d/2} \exp \left( -\frac{1}{2}|\boldsymbol{\mu}|^2 \right) \int_{\mathbb{R}^d \setminus B(0, R-1)} |\boldsymbol{x}|^4 \exp \left( -\frac{1}{4}|\boldsymbol{x}|^2 \right) \mathrm{d}\boldsymbol{x}.
\]
Evaluating the integral in spherical coordinates, we obtain
\[
\int_{\mathbb{R}^d \setminus B(0, R-1)} |\boldsymbol{x}|^4 \exp \left( -\frac{1}{4}|\boldsymbol{x}|^2 \right) \mathrm{d}\boldsymbol{x} \leq \frac{(4 + 2)!!}{4\sqrt{\pi}} R^3 e^{-\frac{R^2}{4}}.
\]
Thus
\[
\int_{\mathbb{R}^d \setminus H} |\boldsymbol{x}|^4 \exp\{2B|\boldsymbol{x}|\} q(\boldsymbol{x}) \mathrm{d}\boldsymbol{x} \leq C \exp \left( -\frac{R^2}{4} \right),
\]
which satisfies the condition for sufficiently large $R$.

 Furthermore, if \(\frac{f}{q} \in \mathcal{S}(\mathbb{R}^d) \) and satisfies the growth condition \cref{term:GrowthCondition_Extended}, then by Theorem \ref{thm:MainTheorem}, we can get \cref{eq:ODIS_error}.
\end{proof}
\section{The Applications of RQMC-IS in Finance Via Preintegration}\label{sec:Preintegration}

 In this section, we focus on discontinuous integrands of the form
\begin{equation}\label{eq:f_form}
 f(\x)=g(\x )\mathbb{I} \{ \phi (\x )\geq 0\} ,
\end{equation}
where $\x \in \mathbb{R}^d$, $g$ and $\phi$ are smooth functions. In option pricing, many payoff funtions can be written in this form. 
We consider calculating the expectation 
\begin{equation*}
    \mathbb{E}[f(\boldsymbol{X})] = \int_{\mathbb{R}^d} f(\x) \prod_{j=1}^d \rho_j(x_j) d\x,
\end{equation*}
where $\rho_j$ is an one-dimensional probability density function (p.d.f.).

Since $f$ is not a smooth function, to apply the results from the previous sections on RQMC-IS, we need to smooth the integrand $f$. To achieve this, we introduce the preintegration approach.

For fixed $j \in 1{:}d$, denote $\x_{-j}$ as the $d-1$ components of $\x$ apart from $x_j$. Integrating $f(\x)$ in \eqref{eq:f_form} with
respect to $x_j$ (i.e., taking $\x_{-j}$ as the conditioning variables) gives
\begin{equation*}\label{eq:condint}
(P_j f)(\x_{-j}):=\E[f(\x)|\x_{-j}]=\int_{-\infty}^{\infty} f(x_j,\x_{-j})\rho_j(x_j) dx_j,
\end{equation*}
where $P_j f$ denotes the function obtained by taking the preintegration of $f$ with respect to $x_j$.

Next, we study the smoothness and growth rates of the function $P_j f$. To derive the desired results, we follow the assumptions and use the conclusions established in \cite{preintegration_kuo2018, he2019}.
Firstly,we need $\phi (\x ) $ in \cref{eq:f_form} to be strictly monotone with respect
 to $x_j$.
\begin{assumption}\label{assump_mono}
    Let $j\in 1{:}d$ be fixed. Assume that 
    \begin{equation*}\label{eq:assump1}
    (\partial_j\phi)(\x)\neq 0 \text{ for all } \x\in \mathbb{R}^d.
    \end{equation*}
\end{assumption}
Under this assumptions, the results in \cite{he2019} provide conclusions about both the smoothness of $P_jf$ and its growth rate.

\begin{theorem}\label{thm:grie}
    Let $r$ be a positive integer. Suppose that $f$ is given by \eqref{eq:f_form} with $g,\phi\in  \mathcal{C}^r(\R^d)$ and $\E{\abs{f(\bm X)}}<\infty$, $\rho\in \mathcal{C}^{r-1}(\R)$, and Assumption~\ref{assump_mono} is satisfied. Denote $\y=\x_{-j}$. Let
    \begin{align*}
    U_j &= \{\y\in \R^{d-1}|\phi(x_j,\y)=0 \text{ for some }x_j\in \R\},\label{eq:uj}\\
    U_j^+ &= \{\y\in \R^{d-1}|\phi(x_j,\y)>0 \text{ for all  }x_j\in \R\},\notag\\
    U_j^- &= \{\y\in \R^{d-1}|\phi(x_j,\y)<0 \text{ for all  }x_j\in \R\}.\notag
    \end{align*}
    Then $U_j$ is open, and there exists a unique function $\psi\in \mathcal{C}^r(U_j)$ such that 
    $\phi(\psi(\y), \y) = 0$ for all $\y\in U_j$. 
    
    Assume that the following two condition is satisfied.
    \begin{enumerate}[label=(\Alph*)]
        \item  For any $\y^*\in U_j$, there exists a ball $B(\y^*, \delta) := \left\{ \y : |\y - \y^*| < \delta \right\}
\subseteq U_j$ with $\delta>0$ such that  $\int_{-\infty}^\infty \partial^{\bm \alpha} g(
    x_j,\y)\rho(x_j)d x_j$ converges uniformly on $B(\y^*,\delta)$ for any multi-index $\bm{\alpha}$ satisfying $\abs{\bm{\alpha}}\le r$ and $\alpha_j=0$.
        \item Every function over $U_j$ of the form
    \begin{equation}\label{eq:hy}
    h(\y) = \beta\frac{(\partial^{\bm \alpha^{(0)}}g)(\psi(\y),\y)\prod_{i=1}^a(\partial^{\bm \alpha^{(i)}}\phi)(\psi(\y),\y)}{[(\partial_j \phi)(\psi(\y),\y)]^b}\rho^{(c)}(\psi(\y)),
    \end{equation}
    where $\beta$ is a constant, $a,b,c$ are integers, and $\bm\alpha^{(i)}$ are multi-indices with the constraints $1\le a\leq 2r-1$, $1\le b\le 2r-1$, $0\le c\le r-1$, $\abs{\bm\alpha^{(i)}}\le r$, satisfies 
    \begin{equation}\label{eq:bounds}
    h(\bm y)\to 0\text{ as }\bm y\text{ approaches a boundary point of }U_j\text{ lying in }U_j^+\text{ or }U_j^-.
    \end{equation}
    \end{enumerate}

    Then $P_j f\in \mathcal{C}^r(\R^{d-1})$, and for every multi-index $\bm \alpha$ with $\abs{\bm \alpha}\leq r$ and $\alpha_j=0$,
    \begin{equation}\label{eq:bound4dfj}
    \abs{(\partial^{\bm \alpha} P_j f)(\bm y)}\le\int_{-\infty}^{\infty}\abs{(\partial^{\bm \alpha} g)(x_j,\bm y)}\rho(x_j)d x_j+\sum_{i=1}^{M_{\abs{\bm \alpha}}} \abs{h_{\bm \alpha,i}(\bm y)},
    \end{equation}
    where $M_{\abs{\bm \alpha}}$ is a nonnegative integer depending only on $\abs{\bm \alpha}$, and for $\bm y\in U_j$ and $\abs{\bm{\alpha}}>0$, $h_{\bm \alpha,i}(\bm y)$ has the form \eqref{eq:hy} with parameters satisfying $1\le a\leq 2\abs{\bm \alpha}-1$, $1\le b\le 2\abs{\bm \alpha}-1$, $0\le c\le \abs{\bm \alpha}-1$, $|\bm\alpha^{(i)}|\le \abs{\bm \alpha}$, otherwise $h_{\bm \alpha,i}(\y)=0$.
\end{theorem}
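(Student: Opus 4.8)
The plan is to follow the argument of He \cite{he2019}, reducing the whole statement to the analysis of a single variable-endpoint integral and then differentiating it. First I would fix the sign of $\partial_j\phi$: since $(\partial_j\phi)(\x)\neq 0$ everywhere and $\partial_j\phi$ is continuous on the connected set $\R^d$, it has a constant sign, say $\partial_j\phi>0$ (the case $\partial_j\phi<0$ is symmetric, with the integration limit placed above rather than below $\psi$). Then for each $\y=\x_{-j}$ the map $x_j\mapsto\phi(x_j,\y)$ is strictly increasing, so exactly one of three alternatives holds: $\phi(\cdot,\y)>0$ everywhere ($\y\in U_j^+$), $\phi(\cdot,\y)<0$ everywhere ($\y\in U_j^-$), or there is a unique root $\psi(\y)$ ($\y\in U_j$); this already gives the stated partition. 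Openness of $U_j$ and $\psi\in\mathcal{C}^r(U_j)$ follow from the implicit function theorem applied locally (using $\partial_j\phi\neq0$) together with the global uniqueness of the root. On $U_j$ one has $(P_jf)(\y)=\int_{\psi(\y)}^{\infty}g(x_j,\y)\rho(x_j)\,\mathrm{d}x_j$, on $U_j^+$ it equals $\int_{-\infty}^{\infty}g\rho\,\mathrm{d}x_j$, and on $U_j^-$ it is $0$.

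The core computation is to differentiate the variable-endpoint integral on $U_j$. Applying the Leibniz rule for differentiation under the integral sign together with the chain rule at the lower limit, each differentiation $\partial/\partial y_i$ produces an integral term $\int_{\psi(\y)}^{\infty}\partial^{\bm e_i} g\,\rho\,\mathrm{d}x_j$ plus a boundary term in which the factor $(\partial_i\psi)(\y)=-(\partial_i\phi)(\psi(\y),\y)/(\partial_j\phi)(\psi(\y),\y)$ appears, obtained by implicitly differentiating $\phi(\psi(\y),\y)=0$. Iterating and grouping terms, an induction on $\abs{\bm\alpha}$ shows that $(\partial^{\bm\alpha}P_jf)(\y)$ equals $\int_{\psi(\y)}^{\infty}(\partial^{\bm\alpha}g)(x_j,\y)\rho(x_j)\,\mathrm{d}x_j$ plus finitely many boundary terms, each of the form \eqref{eq:hy}; tracking at each step how many $\phi$-factors are generated, how the power of $\partial_j\phi$ in the denominator grows when $1/\partial_j\phi$ and $\psi$ are themselves differentiated, and how $\rho$ picks up derivatives yields the stated ranges $1\le a\le2\abs{\bm\alpha}-1$, $1\le b\le2\abs{\bm\alpha}-1$, $0\le c\le\abs{\bm\alpha}-1$, $\abs{\bm\alpha^{(i)}}\le\abs{\bm\alpha}$. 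Condition (A) is exactly what justifies differentiating under the integral sign and shows the integral parts are continuous on $U_j$ (and, analogously, on $U_j^+$ and $U_j^-$). Bound \eqref{eq:bound4dfj} then follows by the triangle inequality, using $\int_{\psi(\y)}^{\infty}\abs{\partial^{\bm\alpha}g}\rho\le\int_{-\infty}^{\infty}\abs{\partial^{\bm\alpha}g}\rho$.

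It remains to glue the three pieces into a single $\mathcal{C}^r$ function on $\R^{d-1}$. If $\y$ approaches a boundary point of $U_j$ lying in $U_j^+$, strict monotonicity forces the root to escape, $\psi(\y)\to-\infty$, so $\int_{\psi(\y)}^{\infty}g\rho\to\int_{-\infty}^{\infty}g\rho$, matching the $U_j^+$ value; similarly $\psi(\y)\to+\infty$ toward $U_j^-$ and the integral tends to $0$. The derivative expressions match in the same way for their integral parts, while Condition (B) is precisely the statement that every boundary term $h_{\bm\alpha,i}$ tends to $0$ there; hence all partials of order $\le r$ extend continuously across $\partial U_j$ and agree, giving $P_jf\in\mathcal{C}^r(\R^{d-1})$. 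The main obstacle is the bookkeeping in the induction of the second paragraph: verifying that after $\abs{\bm\alpha}$ differentiations every boundary term is still exactly of the admissible form \eqref{eq:hy} with the claimed parameter bounds requires a careful argument on the "degree" of the rational expressions in $\partial_j\phi$ generated by repeatedly differentiating $1/\partial_j\phi$ and $\psi$; the secondary subtlety is the analytic care — supplied by Conditions (A) and (B) — needed to ensure the differentiated integrals converge uniformly up to $\partial U_j$, so that the continuous extension across the three regions is legitimate.
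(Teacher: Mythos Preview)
The paper does not actually prove this theorem; it is quoted from \cite{he2019} without argument. Your proposal correctly reconstructs the proof strategy of that reference --- fixing the sign of $\partial_j\phi$, obtaining $\psi$ by the implicit function theorem, differentiating the variable-endpoint integral via Leibniz and the chain rule with $\partial_i\psi=-(\partial_i\phi)/(\partial_j\phi)$, inducting on $|\bm\alpha|$ to control the parameters $a,b,c$, and using Conditions~(A) and~(B) to justify differentiation under the integral and to glue across $\partial U_j$ --- so there is nothing to compare against and your outline is the right one.
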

\begin{assumption}\label{assum:grc}
    Suppose that the integers $j\in 1{:}d$ and $r\ge d$ are fixed. There exist constants $L,A>0$  such that 
        \begin{equation}\label{eq:gc_q_phi}
            \max \{\abs{(\partial^{\bm \alpha}g)(\x)},\abs{(\partial^{\bm \alpha}\phi)(\x)},\abs{(\partial_j\phi)(\x)}^{-b}  \}\leq  Le^{A\abs{\x}},
        \end{equation}
    hold for $1\le b\le 2r-1$ and any multi-index $\bm \alpha$ satisfying $\abs{\bm \alpha}\le r$.	
\end{assumption}
\begin{remark}
This assumption is quite natural in financial engineering. Under the Black–Scholes model (see next section), option payoff functions can typically be written in the form given in \eqref{eq:f_form}, where functions $g$ and $\phi$ can be written as functions of the form $e^{\bm A\x}$, with the matrix $ \bm A \in \mathbb{R}^{d \times d}$ . As a result, condition \cref{eq:gc_q_phi} is usually easy to satisfy. Furthermore, the growth condition \cref{eq:gc_q_phi} is relatively simple to verify, especially when compared with Assumption 4.5 in \cite{he2019}, which is often difficult to verify in practice.
\end{remark}
In this paper, we consider the case where the density is the standard normal density, i.e., $\rho = \varphi$. Based on the growth condition \cref{eq:gc_q_phi}, we can establish the following theorem, which provides an upper bound on the growth of the function after preintegration with respect to the $j$th component.
\begin{theorem}\label{thm:pvf}
Suppose that $f$ is given by \eqref{eq:f_form} with $g,\phi\in  \mathcal{C}^r(\R^d)$, where $r$ is a positive integer. 
Suppose that Assumptions~\ref{assump_mono} and \ref{assum:grc} are satisfied with $r$ and fixed constants $L, A >0$, ~and~ $j\in 1{:}d$. Then there exist constants $L', A'>0$ such that
\begin{equation}
    \abs{(\partial^{\bm \alpha} P_j f)(\bm y)}=\int_{-\infty}^{\infty} f(x_j,\y)\varphi(x_j) dx_j\le L'e^{A'\abs{\boldsymbol{y}}},
\end{equation}\label{eq:gc_pf}
where $\varphi(x)$ is the one-dimensional standard normal density function.
\end{theorem}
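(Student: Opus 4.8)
## Proof Proposal for Theorem \ref{thm:pvf}

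The plan is to invoke Theorem \ref{thm:grie} to express $\abs{(\partial^{\bm\alpha} P_j f)(\bm y)}$ in terms of the bound \eqref{eq:bound4dfj}, and then estimate each of the two pieces on the right-hand side using the growth condition \eqref{eq:gc_q_phi} of Assumption \ref{assum:grc}. A preliminary step is to check that the hypotheses of Theorem \ref{thm:grie} actually hold under our assumptions: Assumption \ref{assump_mono} is exactly condition there, and conditions (A) and (B) must be verified. For (A), the uniform convergence of $\int_{-\infty}^\infty (\partial^{\bm\alpha} g)(x_j,\bm y)\varphi(x_j)\,dx_j$ on a small ball follows because $\abs{(\partial^{\bm\alpha} g)(x_j,\bm y)} \le L e^{A(\abs{x_j}+\abs{\bm y})}$ by \eqref{eq:gc_q_phi} (using $\abs{\x}\le\abs{x_j}+\abs{\bm y}$), and $e^{A\abs{x_j}}\varphi(x_j)$ is integrable with a dominating bound locally uniform in $\bm y$. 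For (B), each $h(\bm y)$ of the form \eqref{eq:hy} is, by \eqref{eq:gc_q_phi}, bounded by a product of at most finitely many factors each $\le L e^{A\abs{(\psi(\y),\y)}}$ times $\abs{\rho^{(c)}(\psi(\y))}$; since $\rho=\varphi$ and its derivatives decay like $e^{-\psi(\y)^2/2}$ times a polynomial, the Gaussian factor dominates the exponential growth in $\psi(\y)$, forcing $h(\bm y)\to 0$ at any boundary point where $\psi$ stays finite, and where $\psi\to\pm\infty$ the same Gaussian decay wins; so (B) holds. This lets us apply Theorem \ref{thm:grie} with $r$ as given.

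Next I would bound the first term $\int_{-\infty}^\infty \abs{(\partial^{\bm\alpha}g)(x_j,\bm y)}\varphi(x_j)\,dx_j$. Using \eqref{eq:gc_q_phi} and $\abs{\x}\le\abs{x_j}+\abs{\bm y}$, this is at most $L e^{A\abs{\bm y}}\int_{-\infty}^\infty e^{A\abs{x_j}}\varphi(x_j)\,dx_j = C_1 e^{A\abs{\bm y}}$ for a finite constant $C_1 = C_1(A,L)$. So this term already has the desired form with $A' = A$.

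The main work is the second term $\sum_{i=1}^{M_{\abs{\bm\alpha}}} \abs{h_{\bm\alpha,i}(\bm y)}$, where each $h_{\bm\alpha,i}$ has the form \eqref{eq:hy} with bounded parameters $a,b\le 2\abs{\bm\alpha}-1\le 2r-1$, $c\le r-1$, $\abs{\bm\alpha^{(i)}}\le r$. By \eqref{eq:gc_q_phi}, $\abs{(\partial^{\bm\alpha^{(0)}}g)(\psi(\y),\y)}\le Le^{A\abs{(\psi(\y),\y)}}$, each $\abs{(\partial^{\bm\alpha^{(i)}}\phi)(\psi(\y),\y)}\le Le^{A\abs{(\psi(\y),\y)}}$, and $[(\partial_j\phi)(\psi(\y),\y)]^{-b}\le Le^{A\abs{(\psi(\y),\y)}}$; multiplying these (at most $a+2\le 2r+1$ factors) gives a bound $C_2 e^{(2r+1)A\abs{(\psi(\y),\y)}}$. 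For the density factor, $\abs{\rho^{(c)}(\psi(\y))}=\abs{\varphi^{(c)}(\psi(\y))}\le P_c(\abs{\psi(\y)})\,\varphi(\psi(\y))$ for a polynomial $P_c$ of degree $c$. Since $\abs{(\psi(\y),\y)}\le\abs{\psi(\y)}+\abs{\y}$, collecting everything gives
\[
\abs{h_{\bm\alpha,i}(\bm y)} \le C_3\, e^{(2r+1)A\abs{\bm y}}\cdot \sup_{t\in\R}\left[ P_c(\abs{t})\, e^{(2r+1)A\abs{t}}\, e^{-t^2/2}\right] = C_4\, e^{(2r+1)A\abs{\bm y}},
\]
because the supremum over $t$ is finite: the Gaussian $e^{-t^2/2}$ beats the subexponential factor $P_c(\abs t)e^{(2r+1)A\abs t}$. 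Summing over the finitely many $i\le M_{\abs{\bm\alpha}}$ and taking the maximum over the finitely many multi-indices $\bm\alpha$ with $\abs{\bm\alpha}\le r$, and combining with the first-term bound, yields $\abs{(\partial^{\bm\alpha} P_j f)(\bm y)}\le L' e^{A'\abs{\bm y}}$ with $A' = (2r+1)A$ and $L'$ absorbing all constants. The one subtlety — and the step I expect to require the most care — is that the bound \eqref{eq:bound4dfj} only holds pointwise for $\bm y\in U_j$ (with the $h$-terms vanishing off $U_j$), so I must separately handle $\bm y\in U_j^+\cup U_j^-$, where $P_j f$ restricted there is either $\int (\partial^{\bm\alpha}g)\varphi\,dx_j$ (on $U_j^+$, where $\phi>0$ always) or $0$ (on $U_j^-$); in both cases the first-term bound $C_1 e^{A\abs{\bm y}}$ suffices, so the estimate $L'e^{A'\abs{\bm y}}$ extends to all of $\R^{d-1}$ by continuity of $P_j f\in\mathcal{C}^r(\R^{d-1})$.
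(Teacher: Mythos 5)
Your overall route is the same as the paper's: verify conditions (A) and (B) of Theorem~\ref{thm:grie} using Assumption~\ref{assum:grc}, then bound the two terms in \eqref{eq:bound4dfj}, ending with $A'=(2r+1)A$. Your treatment of (A) via a Weierstrass-type dominating bound, your estimate of the first integral term, and your bound $\abs{h_{\bm\alpha,i}(\y)}\le C\,e^{(2r+1)A\abs{\y}}\sup_{t\in\R}\bigl[P_c(\abs{t})e^{(2r+1)A\abs{t}}e^{-t^2/2}\bigr]$ all agree with the paper's argument. (Minor omission: Theorem~\ref{thm:grie} also assumes $\E[\abs{f(\bm X)}]<\infty$, which follows at once from \eqref{eq:gc_q_phi}; the paper records this explicitly.)

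However, your verification of condition (B) contains a genuine flaw. You claim the Gaussian factor forces $h(\y)\to 0$ ``at any boundary point where $\psi$ stays finite.'' That implication is false: if $\y\to\y^*$ with $\psi(\y)$ tending to a finite value $t_0$, then $\abs{\y}$ is bounded and $\varphi^{(c)}(\psi(\y))\to\varphi^{(c)}(t_0)$, so your bound converges to a positive constant and nothing forces $h\to 0$. The necessary step — which the paper supplies and your proposal skips — is to show that this case cannot occur, i.e.\ that $\abs{\psi(\y)}\to\infty$ whenever $\y$ approaches a boundary point of $U_j$ lying in $U_j^+$ or $U_j^-$. The paper gets this from Assumption~\ref{assump_mono}: with, say, $\partial_j\phi>0$, one has $U_j^-=\{\y:\lim_{x_j\to\infty}\phi(x_j,\y)\le 0\}$, so the root escapes to $+\infty$ at boundary points in $U_j^-$ (and to $-\infty$ at those in $U_j^+$); a continuity argument also works, since a bounded subsequence $\psi(\y_n)\to t_0$ would give $\phi(t_0,\y^*)=0$, contradicting $\y^*\in U_j^+\cup U_j^-$. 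Once this is inserted, your Gaussian-decay argument (with $\abs{\y}$ bounded near $\y^*$ and $\kappa(\psi(\y))\to 0$) does establish (B), and the rest of your proposal, including the remark that the $h$-terms vanish off $U_j$ so the final bound holds on all of $\R^{d-1}$, matches the paper's proof.
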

\begin{proof}
Firstly, Assumption~\ref{assum:grc} ensures that $\E{\abs{f(\bm X)}}<\infty$.
In what follows, we just need to prove that the conditions (A) and (B) in Theorem~\ref{thm:grie} are satisfied, then we can get the existence and growth condition of $(\partial^{\bm \alpha} P_j f)(\bm y)$.

Now we verify the condition (A). Let \( \boldsymbol{y}^* \) be an arbitrary point in \( \mathbb{R}^{d-1} \). For any \( \delta > 0 \), from \eqref{eq:gc_q_phi}, we have
\[
\sup_{\boldsymbol{y} \in B(\boldsymbol{y}^*, \delta)} \left| \partial^{\bm \alpha} g(x_j, \boldsymbol{y}) \varphi(x_j) \right| \leq \sup_{\boldsymbol{y} \in B(\boldsymbol{y}^*, \delta)} (Le^{A(\abs{x_j}+\abs{\y})}\varphi(x_j))
\leq M(\delta)e^{A\abs{x_j}}\varphi(x_j),
\]
where \( M(\delta) \) is a constant depending on \( \delta \). Together with
\[
\int_{-\infty}^{\infty}  e^{A\abs{x_j}}\varphi(x_j) \, dx_j < \infty,
\]
Weierstrass test (see Theorem 3.5 in \cite{he2019}) admits that \( \int_{-\infty}^{\infty} \partial^{\alpha} g(x_j, \boldsymbol{y}) \varphi(x_j) \, dx_j \) converges uniformly on \( B(\boldsymbol{y}, \delta) \). So condition (A) in Theorem~\ref{thm:grie} is satisfied.

We next verify condition (B). For the function \( h \) given in \eqref{eq:hy}, by Assumption~\ref{assum:grc}, we find that
\begin{equation*}
|h(\boldsymbol{y})| \leq \kappa(\psi(\boldsymbol{y})) e^{(2r+1)A\abs{\y}},
\end{equation*}
where
\[
\kappa(x) := |\beta| L^{2r+1}e^{(2r+1)A\abs{x}} \varphi^{(c)}(x).
\]
Then \( \kappa(x) \to 0 \) as \( x \to \pm \infty \). To verify condition (B), it suffices to show that \( \psi(\boldsymbol{y}) \) goes to infinity as \( \boldsymbol{y} \) approaches a boundary point of \( U_j \) lying in \( U_j^- \) and \( U_j^+ \). Without loss of generality, we suppose that \( (\partial_j \phi)(x) > 0 \) in Assumption~\ref{assump_mono}. This implies that \( \phi(x_j, \boldsymbol{y}) \) is an increasing function with respect to \( x_j \) for given \( \boldsymbol{y} \). We then have
\[
U_j^- = \left\{ \boldsymbol{y} \in \mathbb{R}^{d-1} \middle| \lim_{x_j \to \infty} \phi(x_j, \boldsymbol{y}) \leq 0 \right\}.
\]

As a result, \( \psi(\boldsymbol{y}) \to \infty \) as \( \boldsymbol{y} \) approaches a boundary point of \( U_j \) lying in \( U_j^- \). Also,
\[
U_j^+ = \left\{ \boldsymbol{y} \in \mathbb{R}^{d-1} \middle| \lim_{x_j \to -\infty} \phi(x_j, \boldsymbol{y}) \geq 0 \right\}.
\]

Similarly, \( \psi(\boldsymbol{y}) \to \infty \) as \( \boldsymbol{y} \) approaches a boundary point of \( U_j \) lying in \( U_j^+ \). Theorem~\ref{thm:grie} gives \( P_j f \in C^r(\mathbb{R}^{d-1}) \).

Finally, by Theorem~\ref{thm:grie}, we have
\begin{equation*}
\abs{(\partial^{\bm \alpha} P_j f)(\bm y)}\le\int_{-\infty}^{\infty}\abs{(\partial^{\bm \alpha} g)(x_j,\bm y)}\varphi(x_j)d x_j+\sum_{i=1}^{M_{\abs{\bm \alpha}}} \abs{h_{\bm \alpha,i}(\bm y)} \le L'e^{A'\abs{\y}}.
\end{equation*}

This completes the proof.
\end{proof}
After imposing Assumption \ref{assump_mono} and Assumption \ref{assum:grc} on the discontinuous functions in finance of the form \cref{eq:f_form}, we derived an upper bound for the growth of the preintegrated function. Consequently, we can apply the RQMC-ODIS convergence theory obtained in the previous section to arrive at the following corollary.
\begin{corollary}\label{cor}
Under the same assumptions in Theorem~\ref{thm:pvf}, we have
\begin{equation*}
\left(\mathbb{E} \left[ \left| \hat{I}_n ((P_j f)\varphi/q) - \mathbb{E}[f(\boldsymbol{X})]\right|^2 \right] \right)^{1/2} = O(n^{-1  + \epsilon}),
\end{equation*}
where $q$ is the p.d.f. of the ODIS-proposal \cref{ODIS_proposal} and $\varphi$ is a $(d-1)$-dimension standard normal density function.
\end{corollary}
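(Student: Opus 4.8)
The plan is to chain together Theorem \ref{thm:pvf}, Theorem \ref{prop:ODIS}, and Theorem \ref{thm:MainTheorem} in a straightforward way, with the main work being to verify that the preintegrated integrand fits into the hypotheses of the RQMC-IS machinery. First I would apply the preintegration operator $P_j$ to $f$ in \eqref{eq:f_form}. Writing $\y = \x_{-j}$, the integral $\E[f(\bm X)]$ over $\R^d$ against the standard normal density factors as $\int_{\R^{d-1}} (P_j f)(\y)\, \varphi(\y)\, \mathrm{d}\y$, where the inner integration over $x_j$ uses the one-dimensional standard normal density as in the statement of Theorem \ref{thm:pvf}, and $\varphi$ here denotes the $(d-1)$-dimensional standard normal density. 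Thus the original $d$-dimensional problem with a discontinuous integrand is reduced to a $(d-1)$-dimensional problem with the smooth integrand $P_j f$, and $\hat I_n((P_j f)\varphi/q)$ is precisely the RQMC-IS estimator $\widehat I_n^{IS}(P_j f)$ in the sense of \eqref{eq:IS_estimator} applied to the $(d-1)$-dimensional expectation $\E[(P_j f)(\bm Y)]$.

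Next I would invoke Theorem \ref{thm:pvf}: under Assumptions \ref{assump_mono} and \ref{assum:grc}, $P_j f \in \mathcal C^r(\R^{d-1})$ with $r \ge d > d-1$, so in particular $P_j f \in \mathcal{S}(\R^{d-1})$, and every mixed partial $\partial^{\bm u}(P_j f)$ with $\bm u \subseteq 1{:}(d-1)$ (which is a $\partial^{\bm\alpha}(P_j f)$ with $|\bm\alpha| \le d-1 \le r$) satisfies $|\partial^{\bm u}(P_j f)(\y)| \le L' e^{A'|\y|}$. Now take $q$ to be the ODIS proposal density \eqref{ODIS_proposal} on $\R^{d-1}$, i.e.\ $q(\y) = \varphi(\y-\bm\mu)$ for some drift $\bm\mu$. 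Then $(P_j f)/q = (P_j f)(\y)\, e^{\frac12|\y|^2 - \frac12|\y-\bm\mu|^2}\cdot(2\pi)^{(d-1)/2}\big/(2\pi)^{(d-1)/2}$; more simply, $\varphi(\y)/q(\y) \propto e^{\bm\mu^\top \y - \frac12|\bm\mu|^2}$, which is bounded by $C e^{|\bm\mu|\,|\y|}$, and this factor together with all its derivatives remains of the same exponential-in-$|\y|$ type. Combining with the $L' e^{A'|\y|}$ bound on the derivatives of $P_j f$ and using the product/Leibniz rule, one gets that $(P_j f)/q \in \mathcal{S}(\R^{d-1})$ and
\[
\sup_{\bm u \subseteq 1{:}(d-1)} \left| \partial^{\bm u}\!\left( \frac{P_j f}{q} \right)(\y) \right| \le \tilde C e^{\tilde B |\y|} \le \tilde C e^{\frac12 |\y|^2 + \tilde B|\y|},
\]
so the generalized growth condition \cref{term:GrowthCondition_Extended} holds (in fact with $A = 0 < 1/2$, but certainly with $A = 1/2$).

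Finally, I would apply Theorem \ref{prop:ODIS} (or directly Theorem \ref{thm:MainTheorem}): the ODIS proposal $q(\y) = \varphi(\y - \bm\mu)$ satisfies \ref{condition:b} by the computation in the proof of Theorem \ref{prop:ODIS}, and $(P_j f)/q$ satisfies \cref{term:GrowthCondition_Extended}, hence $\E[(\widehat I_n^{IS}(P_j f) - \E[(P_j f)(\bm Y)])^2] = O(n^{-2+\epsilon})$. Taking square roots and using $\E[(P_j f)(\bm Y)] = \E[f(\bm X)]$ gives the claimed $O(n^{-1+\epsilon})$ rate. The only genuine obstacle is the bookkeeping in the middle step — confirming that multiplying the derivative bound $L'e^{A'|\y|}$ for $P_j f$ by the Gaussian likelihood ratio $\varphi/q$ (and differentiating the product) still yields a bound of the form $\tilde C e^{\frac12|\y|^2 + \tilde B|\y|}$, so that \cref{term:GrowthCondition_Extended} is verified with the constants in the right regime; everything else is a direct citation of the theorems already proved. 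One should also note explicitly that $r \ge d$ guarantees $P_j f$ has enough derivatives for the $(d-1)$-dimensional smoothness class, and that the reduction in dimension from $d$ to $d-1$ is harmless for all the cited results.
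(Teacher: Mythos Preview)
Your overall strategy---preintegrate, verify the growth condition \cref{term:GrowthCondition_Extended} for $(P_jf)/q$ via the Leibniz rule, then cite Theorem~\ref{prop:ODIS}---is exactly the paper's approach. However, the computation in your middle step is wrong in a way that matters.

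You write $(P_jf)/q = (P_jf)(\y)\,e^{\frac12|\y|^2 - \frac12|\y-\bm\mu|^2}$ and then argue from the boundedness of $\varphi/q$. But the growth condition \cref{term:GrowthCondition_Extended} is on $f/q$, not on $f_{IS}=\varphi f/q$; here $1/q(\y) = (2\pi)^{(d-1)/2}\exp\!\bigl(\tfrac12|\y-\bm\mu|^2\bigr)$, which already has the critical quadratic exponent. Differentiating $1/q$ produces factors that are polynomials in the $y_i$ times $e^{\frac12|\y-\bm\mu|^2}$, so $\bigl|\partial^{\bm u_2}(1/q)(\y)\bigr| \le C e^{\frac12|\y|^2 + B_0|\y|}$ for suitable $B_0,C$, and the Leibniz rule together with Theorem~\ref{thm:pvf} then gives
\[
\sup_{\bm u}\Bigl|\partial^{\bm u}\!\bigl((P_jf)/q\bigr)(\y)\Bigr| \le \tilde C\, e^{\frac12|\y|^2 + \tilde B|\y|},
\]
with $A=\tfrac12$ genuinely forced. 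Your parenthetical claim ``in fact with $A=0<1/2$'' is false: the bound $\tilde C e^{\tilde B|\y|}$ you state first cannot hold, and if it did you would be invoking part~(a) of Theorem~\ref{thm:MainTheorem} rather than the critical case that the whole paper is built around. The hedge ``certainly with $A=1/2$'' rescues the conclusion, but the argument preceding it does not establish it. Once you correct this, the final appeal to Theorem~\ref{prop:ODIS} is exactly right and matches the paper's proof.
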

\begin{proof}
    By Leibniz rule, we have
\begin{equation*}
\left| \partial^{\boldsymbol{u}} (P_jf/q) \right| = \left| \sum_{\boldsymbol{u}_1 + \boldsymbol{u}_2 = \boldsymbol{u}} \partial^{\boldsymbol{u}_1} P_jf~ \partial^{\boldsymbol{u}_2} (1/q) \right| 
\le\sum_{\boldsymbol{u}_1 + \boldsymbol{u}_2 = \boldsymbol{u}} \abs{\partial^{\boldsymbol{u}_1} P_jf}~ \abs{\partial^{\boldsymbol{u}_2} (1/q)}.
\end{equation*}
Then combined with Theorem \ref{thm:MainTheorem} and Theorem~\ref{thm:pvf}, we can get the result.\end{proof}

\section{Numerical Experiments}\label{sec:NumericalExperiments}
In this section, we perform some numerical experiments on pricing financial options whose payoff functions have the form~\eqref{eq:f_form} under either the Black-Scholes model or
 the Heston model. In our experiments, RQMC is implemented as a linear-scrambled version of
 Sobol's points proposed in \cite{linear_scramble} and the ODIS is used in all the following numerical experiments. In all of the following experiments, we will focus on comparing the Root Mean Square Error (RMSE) of the four methods — plain MC, plain QMC, QMC+preintegration or conditional QMC (CQMC), and CQMC+IS.
\subsection{Black-Scholes Model}
Let $S_t$ denote the underlying price dynamics at time $t$ under the risk-neutral measure. Under the Black-Scholes model, $S_t$ follows a geometric Brownian motion satisfying the stochastic differential equation
\begin{equation}\label{eq:BM_SDE}
dS_t =rS_tdt+\sigma S_tdB_t,
\end{equation}
where $r$ is
the risk-free interest rate, $\sigma$ is the volatility and $B_t$
is the standard Brownian motion. The solution of \eqref{eq:BM_SDE} is analytically available
\begin{equation}
S_t=S_0\exp[(r-\sigma^2/2)t+\sigma B_t],\label{Solution}
\end{equation}
where $S_0$ is the initial price of the asset.
Suppose that the maturity time of the option is $T$. In practice, we simulate a discrete Brownian motion. Without loss of generality,  we assume that the discrete time step satisfies $t_i=i\Delta t$, where $\Delta t=T/d$. Denote $S_i=S_{t_i}$. 
Let $\bm{B}:=(B_{t_1},\dots,B_{t_d})^{\top}$. We have $\bm{B}\sim N (\bm{0},\bm{\Sigma})$, where  $\bm{\Sigma}$ is a positive definite matrix with entries $\Sigma_{ij}=\Delta t\min(i,j)$.

Let $\bm{A} := (a_{ij})_{d \times d}$ be a matrix satisfying $\bm{A}\bm{A}^\top=\bm{\Sigma}$. Using the transformation $\bm{B}=\bm{A}\bm x$, where $\x :=(x_{1},\dots,x_d)^{\top} \sim N(\bm 0,\bm I_d)$, it follows from
\eqref{Solution} that
\begin{align}\label{eq:si}
S_i= S_0\exp\left[(r-\sigma^2/2)i\Delta t+\sigma \sum_{j=1}^da_{ij}x_j\right].
\end{align}

\subsubsection{Single-asset Case}
In the single-asset case, the first step is to choose a generating matrix $\bm A$. There are several methods for selecting $A$, with Cholesky decomposition being the most commonly used. Three alternative approaches, Brownian bridge construction \cite{Caflisch1997, BB}, principal component analysis (PCA) \cite{PCA} and gradient PCA (GPCA) 
 \cite{GPCA}, are often used in combination with QMC methods to reduce the effective dimension.

We consider the arithmetic Asian option, whose discounted payoff function is given by
\begin{equation}\label{eq:oneassetpf}
e^{-rT}(S_A - K)_+ = e^{-rT} \max(S_A - K, 0),
\end{equation}
where $K$ is the strike price, and $S_A = (1/d)\sum_{j=1}^d S_j$. 
Regardless of the matrix generation method chosen, it can be verified that Assumption \ref{assump_mono} is satisfied \cite{he2019},
and obviously, the function given by ~\eqref{eq:oneassetpf} satisfies Assumption \ref{assum:grc}.
Next, following \cite{zcj2020}, we provide the analytical expression for the preintegration of the payoff function with respect to $x_1$.

Denote $\omega = r - \sigma^2/2$ , $\boldsymbol{z} = (x_2, \ldots, x_d)^{\mathrm{T}}$
and $\boldsymbol{C}$ is a $(d - 1) \times (d - 1)$ matrix satisfying $\boldsymbol{C}\boldsymbol{C}^T = \hat{\boldsymbol{\Sigma}}$ with
$$
\hat{\boldsymbol{\Sigma}} := \begin{pmatrix}
t_2 - t_1 & t_2 - t_1 & \dots & t_2 - t_1 \\
t_2 - t_1 & t_3 - t_1 & \dots & t_3 - t_1 \\
\vdots & \vdots & \ddots & \vdots \\
t_2 - t_1 & t_3 - t_1 & \dots & t_d - t_1
\end{pmatrix}.
$$
Then we have
\begin{equation*}
\E[e^{-rT} (S_A - K)^+|\x_{-1}] 
= \mathrm{e}^{r(t_1 - T)} \widetilde{S}_A \left[ 1 - \Phi\left( \psi_d - \sigma \sqrt{t_1} \right) \right] - \mathrm{e}^{-rT} K \left[ 1 - \Phi\left( \psi_d \right) \right],
\end{equation*}
where
\begin{equation*}
\widetilde{S}_A = \frac{1}{d} \sum_{j=1}^d \widetilde{S}_j = \frac{1}{d} \sum_{j=1}^d S_0 \exp(\omega (t_j - t_1) + \sigma \boldsymbol{C}_{j-1} \boldsymbol{z}) ,
\psi_d = \frac{\ln K - \ln \widetilde{S}_A - \omega t_1}{\sigma \sqrt{t_1}}
\end{equation*}
and 
\begin{equation*}
\widetilde{S}_j = S_0 \exp(\omega (t_j - t_1) + \sigma \boldsymbol{C}_{j - 1}\boldsymbol{z}).
\end{equation*}

\begin{figure}
    \centering 
    \includegraphics[width=1.\textwidth]{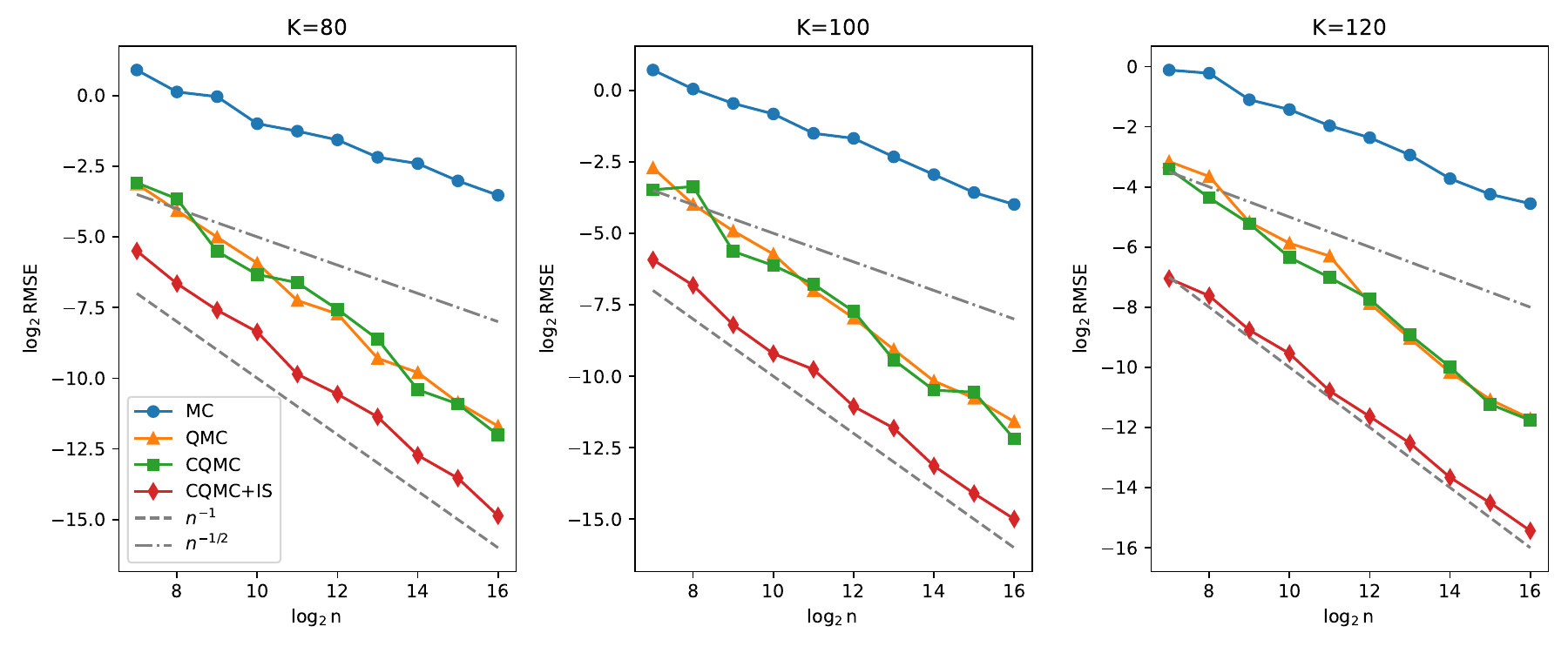}
    \caption{RMSEs for the single asset Asian option with $d=80$. The RMSEs are computed based on $100$ repetitions. The figure has two reference lines proportional to labeled powers of $n$.} 
    \label{Asian_d=80}
\end{figure} 

In the numerical experiments for the single-asset arithmetic Asian option, we set the parameters as follows: initial asset price $S_0 = 100$, volatility $\sigma = 0.4$, maturity $T = 1$, dimension $d = 80$, risk-free rate $r = 0.1$, and strike prices $K \in \{80, 100, 120\}$. RMSEs of RQMC are estimated based on 100 repetitions. We use the CQMC+IS method with a large sample size $n = 2^{19}$ to obtain a high-precision estimate, which is treated as the true value. GPCA is used as the path generation method throughout all the situations. We choose the sample sizes $n = 2^7,\dots,2^{16}$. Figure \ref{Asian_d=80} shows the RMSEs of plain MC, plain QMC, CQMC, and CQMC+IS methods under different strike prices $K = 80, 100, 120$ for $d = 80$, respectively.

It can be observed that the convergence rate of the CQMC+IS method closely matches the theoretical rate of $n^{-1}$.
CQMC+IS achieves the lowest RMSE, leveraging preintegration and importance sampling to reduce variance. For $d = 80$, QMC and CQMC perform better than MC, but they are outperformed by CQMC+IS, which effectively improves efficiency through conditional smoothing and dimension reduction.  
\subsubsection{Multi-assets Case}
In this subsection, we study a multi-assets option, which is called the basket option. A basket option depends on a weighted average of the prices of several assets \cite{lsf2023}. Suppose that under the risk-neutral measure, the $L$ assets $S^{(1)},\ldots,S^{(L)}$ follow
\begin{equation*} 
    \mathrm{d}S_t^{(l)} = r S_t^{(l)} \, \mathrm{d}t + \sigma_l S_t^{(l)} \, \mathrm{d}B_t^{(l)},
\end{equation*}
where $\{B^{(l)}_t\}_{1 \leq l \leq L}$ are standard Brownian motions with correlation $\rho$, i.e.,  
    $Cov(B^{(l)}_s, B^{(k)}_t) = \rho \min(s, t)$
for all $t,s > 0$ and $l,k~\in \{1, 2 \dots L\}, l \neq k$. For some nonnegative weights $w_1 + \cdots + w_L = 1$, the payoff function of the basket Asian call option is given by
\begin{equation*}
    \left( \sum_{l=1}^L w_l \bar{S}^{(l)} - K \right)_+,
\end{equation*}
where $\bar{S}^{(l)} = \frac{1}{d} \sum_{j = 1}^d S_j^{(l)}$ is the arithmetic average of $S_t^{(l)}$ in the time interval $[0, T]$.
Here, we only consider $L = 2$ assets. To generate $B^{(1)}, B^{(2)}$ with correlation $\rho$, we can generate two independent standard Brownian motions $W^{(1)}, W^{(2)}$ and let $B^{(1)} = W^{(1)}$, $B^{(2)} = \rho W^{(1)} + \sqrt{1 - \rho^2} W^{(2)}$. Following the same discretization as before, we can generate $ \boldsymbol{z} \sim N (0, I_{2d})$. Then for time steps $j = 1, \ldots, d$, let
\begin{equation*}
    S_j^{(1)} = S_0^{(1)} \exp\left( \left( r - \frac{\sigma_1^2}{2} \right) j \Delta t + \sigma_1 \left( \rho \sum_{k=1}^d \bm A_{jk} z_{k+d} + \sqrt{1 - \rho^2} \sum_{k=1}^d \bm A_{jk} z_k \right) \right)
\end{equation*}
and
\begin{equation*}
    S_j^{(2)} = S_0^{(2)} \exp\left( \left( r - \frac{\sigma_2^2}{2} \right) j \Delta t + \sigma_2 \sum_{k=1}^d \bm A_{jk} z_{k+d}\right).
\end{equation*}
Then the discounted payoff function becomes
\begin{equation}\label{eq:multiasset_pf}
      e^{-rT}\left( \bar{S}- K \right)_+,
\end{equation}
where
\begin{equation*}
    \bar{S} = \frac{w_1}{d}\sum_{j=1}^d S_j^{(1)}+\frac{w_2}{d}\sum_{j=1}^d S_j^{(2)}.
\end{equation*}
Obviously, the discounted payoff funtion of the option satisfies the Assumption \ref{assum:grc}. We choose the cholesky decomposition to be the matrix $\bm A$. Thus all coefficients preceding $z_1$ are positive, and Assumption \ref{assump_mono} is thereby naturally satisfied. Next, we derive the preintegration expression of equation \eqref{eq:multiasset_pf} with respect to $z_1$. Due to Assumption \ref{assump_mono} is satisfied, there exists a function $\psi_d$ such that $\{\bar{S} > K\} = \{\bar{S}(\psi_d(\z_{-1}), \z_{-1}) > K\} = \{z_1 > \psi_d\}$.
Then we have
\begin{align*}
    &\E[e^{-rT} (\bar{S}- K)_+|\z_{-1}] 
    =\int_{-\infty}^{+\infty} e^{-rT}(\bar{S}-K)\mathbb{I}\{\bar{S}-K>0\}\varphi(z_1)dz_1\\   
    =&\int_{-\infty}^{+\infty} e^{-rT}\left(\frac{w_1}{d}\sum_{j=1}^d S_j^{(1)}+\frac{w_2}{d}\sum_{j=1}^d S_j^{(2)}-K\right)\mathbb{I}\{z_1>\psi_d\}\varphi(z_1)dz_1\\
    =&\frac{w_1}{d}\sum_{j=1}^d \exp\left(-rT+\frac{1}{2}\sigma_1^2(1-\rho^2)A_{j1}^2+\left(r-\frac{\sigma_1^2}{2}\right)j\Delta t
    +\sigma_1 \left( \rho \sum_{k=1}^d A_{jk} z_{k+d} + \sqrt{1 - \rho^2} \sum_{k=1}^d A_{jk} z_k \right)\right)\\
    \cdot&(1-\Phi(\psi_d-\sigma_1\sqrt{1-\rho^2}A_{j1}))+e^{-rT}\left(\sum_{j=1}^d S_j^{(2)}-K\right)(1-\Phi(\psi_d)).
\end{align*}
\begin{figure}
    \centering 
    \includegraphics[width=1.\textwidth]{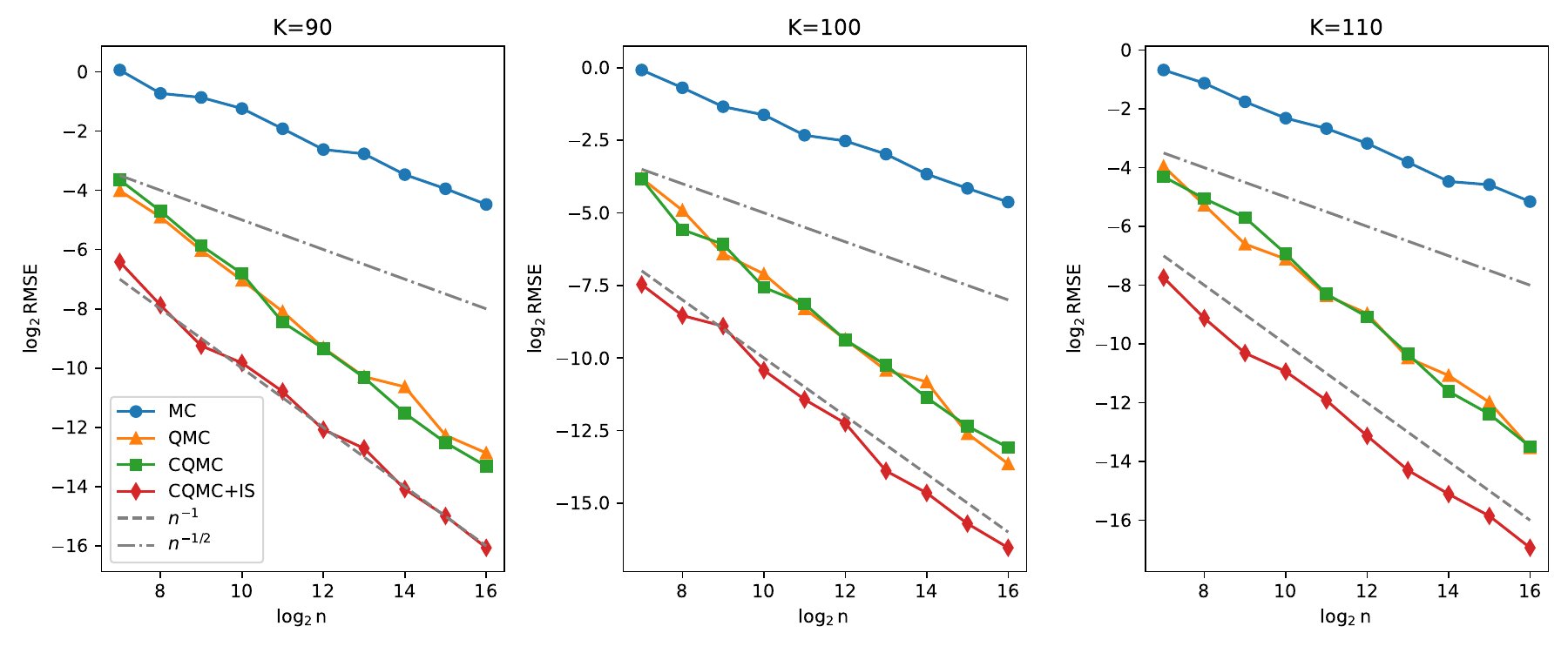}
    \caption{RMSEs for the basket Asian option with $2d=30$. The RMSEs are computed based on $100$ repetitions. The figure has two reference lines proportional to labeled powers of $n$.} 
    \label{2asset_d=15}
\end{figure}
\begin{figure}
    \centering 
    \includegraphics[width=1.\textwidth]{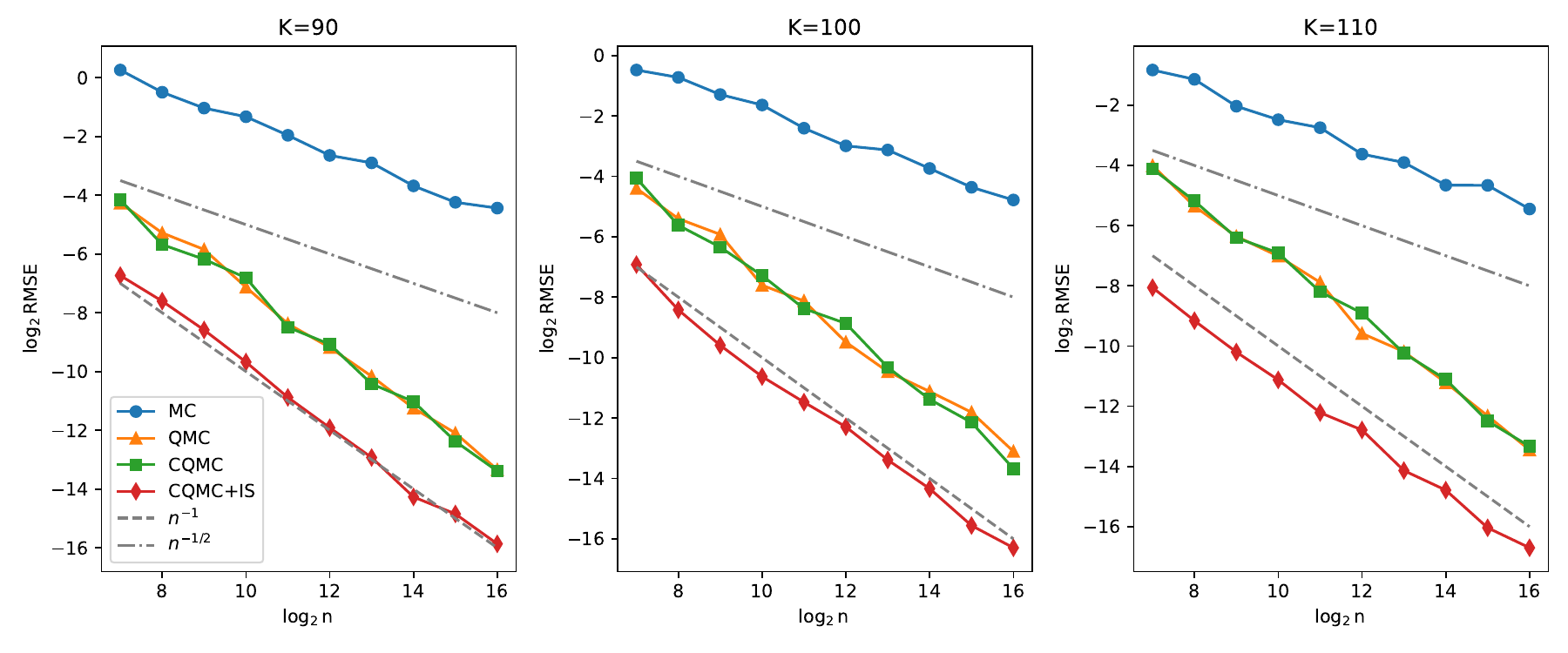}
    \caption{RMSEs for the basket Asian option with $2d=60$. The RMSEs are computed based on $100$ repetitions. The figure has two reference lines proportional to labeled powers of $n$.} 
    \label{2asset_d=30}
\end{figure}

The parameters are set as follows: initial prices of the two assets $S_0^{(1)} = S_0^{(2)} = 100$, volatilities $\sigma_1 = 0.3$, $\sigma_2 = 0.1$, maturity $T = 1$, weights $w_1 = 0.7$, $w_2 = 0.3$, dimensions $d =  15, 30$, risk-free rate $r = 0.05$, strike prices $K \in \{90, 100, 110\}$, and correlation coefficient $\rho = 0.5$. The nominal dimensions are $2d = 30, 60$. The RMSEs of RQMC are estimated based on 100 repetitions. We use the CQMC+IS method with a large sample size $n = 2^{19}$ to obtain an accurate estimation and treat it as the true value. GPCA is used as the path generation method throughout all the situations. We choose the sample sizes $n = 2^7,\dots,2^{16}$. Figures \ref{2asset_d=15} and \ref{2asset_d=30} show  the RMSE of plain MC, plain QMC, CQMC, and CQMC+IS methods under different strike prices $K = 90, 100, 110$ for $d = 15$ and $d = 30$, respectively.

It can be observed that the convergence rate of the CQMC+IS method closely matches the theoretical rate of $n^{-1}$.
Notably, for out-of-the-money options (e.g., \(K = 110\)), the variance reduction effect of IS is more pronounced: CQMC+IS achieves a significantly lower RMSE compared to other methods. This is because out-of-the-money scenarios involve rarer payoff events, and IS effectively targets these events, enhancing the efficiency.  
The weighted average structure of the basket option introduces additional complexity, but CQMC+IS still maintains the lowest RMSE. As the dimension increases (from \(d = 15\) to \(d = 30\)), the performance gap between CQMC+IS and other methods widens: QMC and CQMC outperform MC, but are surpassed by CQMC+IS, which handles multi-asset correlation and high-dimensionality through importance sampling and conditional smoothing.
\subsection{Heston Model}
Under the Heston framework, the risk-neutral dynamics of the asset can be expressed as
\begin{equation*}
    dS_t = rS_t dt + \sqrt{V_t}S_t dB^{(1)}_t,
\end{equation*}
\begin{equation*}
    dV_t = (\theta - V_t)\nu dt + \sigma \sqrt{V_t} dB^{(2)}_t,
\end{equation*}
where $\nu$ is the mean-reversion parameter of the volatility process $V_t$, $\theta$ is the long run average price variance, $\sigma$ is the volatility of the volatility, $B^{(1)}_t$ and $B^{(2)}_t$ are two standard Brownian motions with an instantaneous correlation $\rho$, i.e., $Cov(B^{(1)}_s, B^{(2)}_t) = \rho \min(s, t)$
for any $s, t > 0$. One may write that $B^{(1)}_t = \hat{\rho} W^{(1)}_t + \rho W^{(2)}_t$ and $B^{(2)}_t = W^{(2)}_t$, where $\hat{\rho} = \sqrt{1 - \rho^2}$ and $W^{(1)}_t$ and $W^{(2)}_t$ are two independent standard Brownian motions.
Let $\boldsymbol{z} = (z_1, z_2, \ldots, z_{2d-1}, z_{2d})^\mathrm{T} \sim N(\boldsymbol{0}, \bm I_{2d})$. 
We use the Euler-Maruyama scheme to discretize the asset paths \cite{heston}, resulting in
\begin{equation*}
S_i = S_0 \exp\left\{ ri\Delta t - \Delta t \sum_{j=0}^{i-1} V_j/2 + \sqrt{\Delta t} \sum_{j=0}^{i-1} \sqrt{V_j}\bigl(\hat{\rho} z_{2j+1} + \rho z_{2j+2}\bigr) \right\}
\end{equation*}
and
\begin{equation*}
V_i = V_0 + \theta \nu i\Delta t - \nu \Delta t \sum_{j=0}^{i-1} V_j + \sigma \sqrt{\Delta t} \sum_{j=0}^{i-1} \sqrt{V_j} z_{2j+2},
\end{equation*}
where $V_0$ is the initial value of the volatility process,  $S_i$ and $V_i$ represent the approximations of $S_{i\Delta t}$ and $V_{i\Delta t}$ for $\Delta t = T/m$, respectively. Here we still consider estimating the price of an arithmetic average Asian call option 
\begin{equation*}
\mathbb{E}\bigl[e^{-rT}(S_A - K)_+\bigr], 
\end{equation*}
where $S_A = (1/m)\sum_{i=1}^m S_i$. 

Next, we provide the analytically expression for the preintegration of the payoff function with respect to $z_1$. Assumption \ref{assump_mono} is satisfied, because all coefficients preceding $z_1$ are positive. The derivation is presented as follows (see \cite{zcj2021})
\begin{equation*}
\mathbb{E}\bigl[(S_A - K)_+ \bigm\vert \boldsymbol{z}_{-1}\bigr]  
= \tilde{S}_A e^{(1 - \rho^2)V_0 \Delta t/2} \bigl[1 - \Phi\bigl(\psi_d(\boldsymbol{z}_{-1}) - \hat{\rho}\sqrt{V_0 \Delta t}\bigr)\bigr] - K \bigl[1 - \Phi\bigl(\psi_d(\boldsymbol{z}_{-1})\bigr)\bigr],
\end{equation*}
where 
\begin{equation*}
\psi_d(\boldsymbol{z}_{-1}) = \log(K/\tilde{S}_A)/\sqrt{(1 - \rho^2)V_0 \Delta t}
\end{equation*}
and
\begin{equation*}
\tilde{S}_A = (S_0/m)\sum_{i=1}^m  \exp\left\{ ri\Delta t - \Delta t \sum_{j=0}^{i-1} V_j/2 + \rho \sqrt{\Delta t V_0}\, z_2 + \sqrt{\Delta t} \sum_{j=1}^{i-1} \sqrt{V_j}\bigl(\hat{\rho} z_{2j+1} + \rho z_{2j+2}\bigr) \right\}.
\end{equation*}
\begin{remark}
If the growth condition \cref{eq:gc_q_phi} in Assumption \ref{assum:grc} is modified to be controlled by $Le^{A\abs{\x}^r}$  with $1\leq r<2$, then following the proof of Theorem \ref{thm:pvf} , it can be deduced that the preintegrated function is bounded by such an exponential function. In this case, the convergence order specified in Corollary \ref{cor} remains valid.  
Within the Heston model, the payoff function satisfies this modified Assumption \ref{assum:grc}. Consequently, the convergence order can still be attained.  
\end{remark}

\begin{figure}
    \centering 
    \includegraphics[width=1.\textwidth]{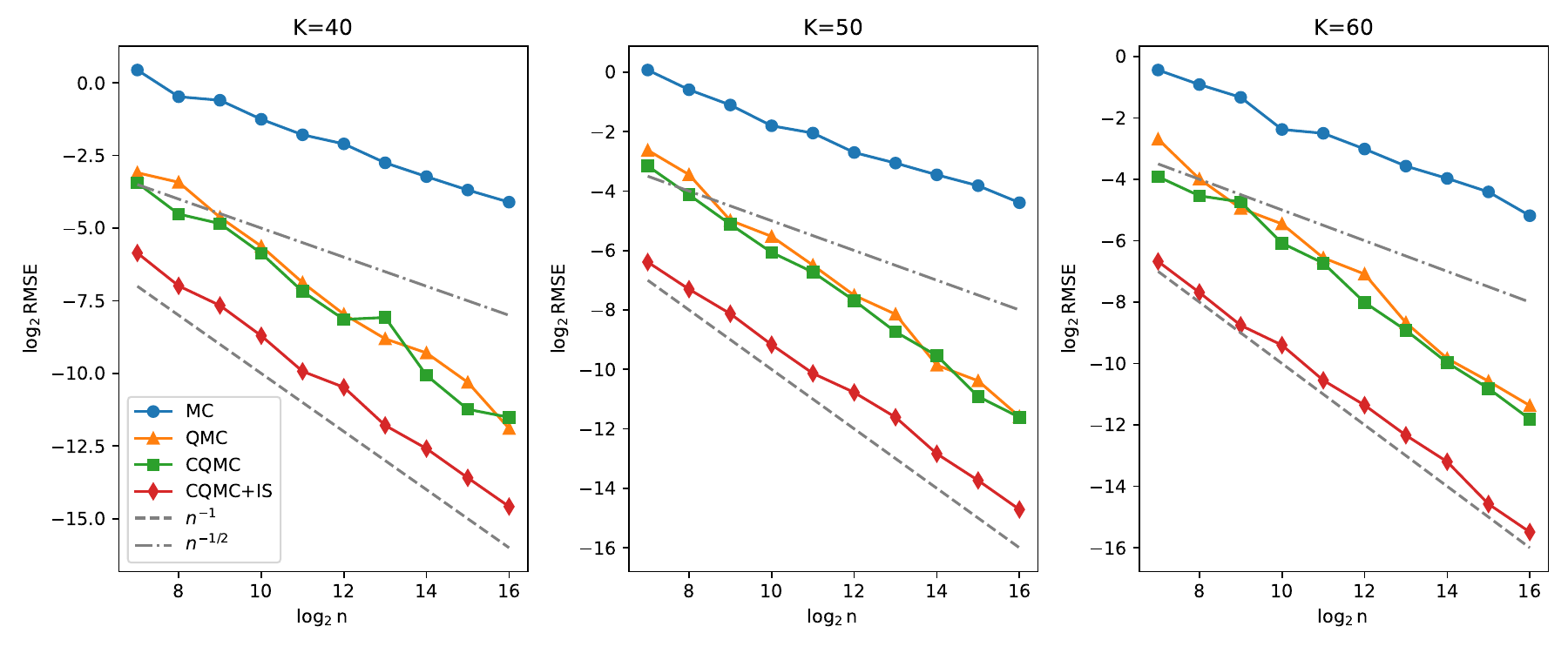}
    \caption{RMSEs for the Heston model with 2d=10. The RMSEs are computed based on 100 repetitions.The figure has two reference lines proportional to labeled powers of n.} 
    \label{heston_d=5}
\end{figure}
\begin{figure}
    \centering 
    \includegraphics[width=1.\textwidth]{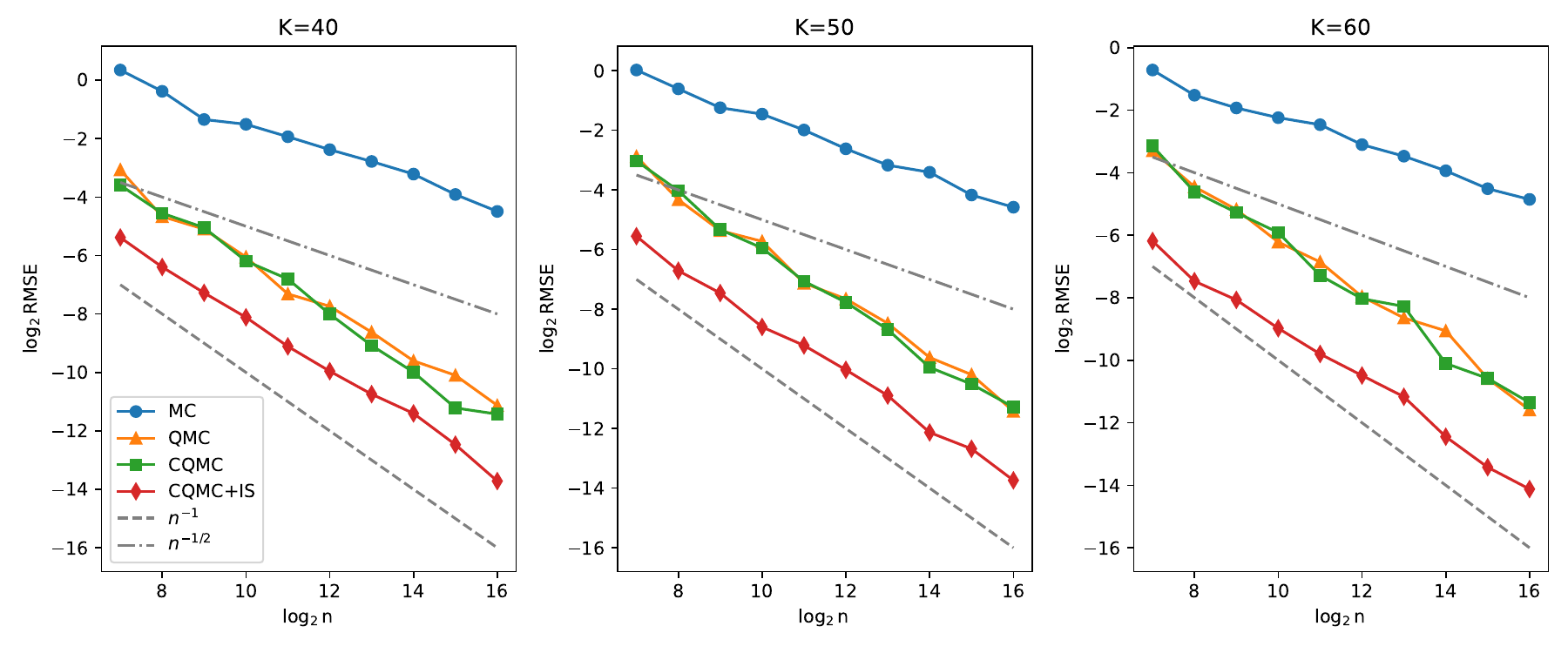}
    \caption{RMSEs for the Heston model with $2d=30$. The RMSEs are computed based on $100$ repetitions.The figure has two reference lines proportional to labeled powers of $n$.} 
    \label{heston_d=15}
\end{figure}
\begin{figure}
    \centering 
    \includegraphics[width=1.\textwidth]{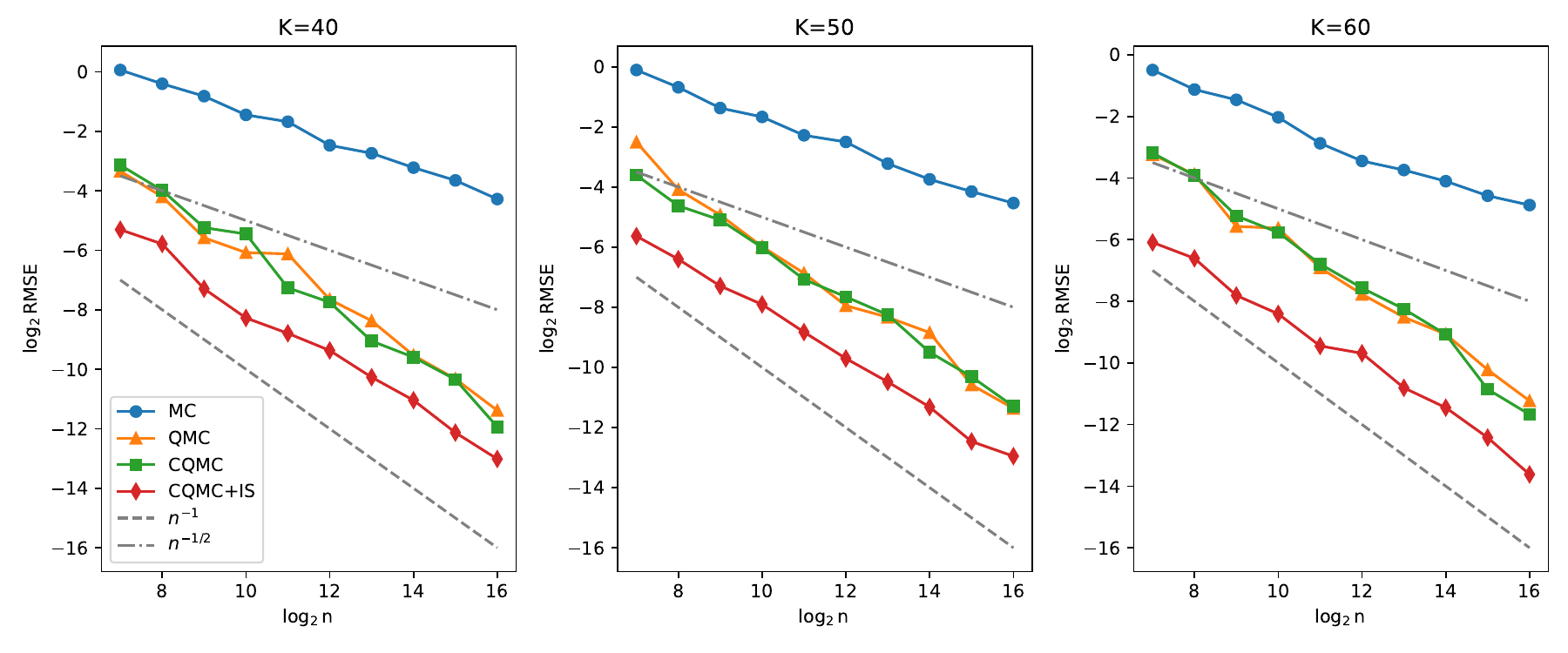}
    \caption{RMSEs for the Heston model with $2d=60$. The RMSEs are computed based on $100$ repetitions.The figure has two reference lines proportional to labeled powers of $n$.} 
    \label{heston_d=30}
\end{figure}

In the numerical experiments, we choose $S_0 = 50$, $V_0 = \theta = \sigma = 0.2$, $T = 1$, $\nu = 1$, $d = 5, 15,30$, $r = 0.05$, $K \in \{40, 50, 60\}$, and $\rho = 0.5$. The nominal dimension is $2d = 10,30,60$. The RMSEs of RQMC are estimated based on 100 repetitions. We use the CQMC+IS method with a large sample size $n = 2^{19}$ to obtain an accurate estimation and treat it as the true value. GPCA is used as the path generation method throughout all the situations. We choose the sample sizes $n = 2^7,\dots,2^{16}$. Figures \ref{heston_d=5}, \ref{heston_d=15} and \ref{heston_d=30} show  the RMSEs of plain MC, plain QMC, CQMC, and CQMC+IS methods under different strike prices $K = 40, 50, 60$ for $d = 5, d = 15$ and $d = 30$, respectively.

It can be observed that the convergence rate of the CQMC+IS method closely matches the theoretical rate of $n^{-1}$.
 Stochastic volatility in the Heston model increases path complexity, but CQMC+IS still achieves the lowest  RMSE across all dimensions, leveraging preintegration and importance sampling to handle volatility randomness.
For $d=5, 15, 30$ (nominal dimension $2d=10, 30, 60$), QMC and CQMC perform better than MC but are outperformed by CQMC+IS, which effectively reduces variance through conditional smoothing and importance sampling.

\section{Conclusion}\label{sec:Conclusion}

In this paper, we addressed a significant gap in the convergence theory of RQMC-IS, particularly for problems involving integrands with critical exponential growth. Previous theories require the growth rate  constant \(A\) in \(e^{A|\boldsymbol{x}|^2}\) to be strictly less than \(1/2\), a condition that is often violated in practice and in cases when effective variance reduction techniques like ODIS are used.

Our primary contribution is the development of a novel, more general convergence theorem for RQMC-IS. This theorem extends the analysis to the critical case where \(A=1/2\), establishing a near \(O(n^{-1})\) convergence rate under a verifiable light-tailed condition on the proposal distribution. We then proved that the Gaussian proposals used in ODIS satisfy this condition, thus providing the first rigorous convergence guarantees for the widely used RQMC-ODIS method in these challenging scenarios.

Furthermore, by integrating our new convergence theory with the preintegration technique, we demonstrated that even discontinuous functions, once smoothed, can be efficiently estimated within our framework. We showed that if the components of the original payoff function have linear exponential growth, the smoothed function preserves this property, allowing it to be seamlessly combined with an IS proposal that introduces the critical quadratic exponential growth.

Numerical experiments on a variety of option pricing problems, including single-asset Asian options, basket options under Black-Scholes model and stochastic volatility (Heston) models, empirically validated our theoretical findings. The results consistently demonstrated that the CQMC+IS method not only achieves the predicted convergence rate but also significantly outperforms standard MC, QMC, and CQMC methods in terms of accuracy and efficiency.

Future research is expected to extend this work in several directions. One promising avenue is to investigate other families of the proposal distributions beyond the Gaussian family, such as those with heavier tails, and analyze their compatibility with our theoretical framework. Another direction is to explore more applications of this methodology to other high-dimensional integration problems beyond finance, for example, in Bayesian statistics or computational physics.

\appendix
\section{Appendix}\label{sec:Appendix}

This appendix provides detailed proofs for the key lemmas presented in the main text. We will prove Lemma \ref{lemma:ProjectionError} by the mean value theorem, and calculate the \(V_{HK}\) as in \cite{Ouyang2024} to prove the Lemma \ref{lemma:V_HK}. They both need the following lemma about \(\partial^{\boldsymbol{u}}(f_{IS})\). Thus we begin by establishing a bound on the derivatives of the IS-based integrand \( f_{IS} \).

\begin{lemma}\label{lemma:DerivativeOfh_IS}
If \( f/q \) as a function in \( \mathcal{S}(\mathbb{R}^d) \) satisfies the growth condition \cref{term:GrowthCondition_Extended} and \( \boldsymbol{u} \subseteq 1:d \), then
\[
|\partial^{\boldsymbol{u}} f_{IS}(\x)| \leq C (1 + |\x|)^{|\boldsymbol{u}|} \exp \left( -\left( \frac{1}{2} - A \right) |\x|^2 + B |\x| \right).
\]
\end{lemma}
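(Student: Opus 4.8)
\textbf{Proof proposal for Lemma~\ref{lemma:DerivativeOfh_IS}.}
The plan is to express $f_{IS}$ in terms of $f/q$ and the Gaussian density $\varphi$, then apply the Leibniz product rule. Write $f_{IS}(\x) = \varphi(\x)\,f(\x)/q(\x)$, where $\varphi(\x) = (2\pi)^{-d/2}e^{-|\x|^2/2}$. Since $\boldsymbol{u}\subseteq 1{:}d$ is a set (each variable differentiated at most once), the Leibniz rule gives
\[
\partial^{\boldsymbol{u}} f_{IS}(\x) = \sum_{\boldsymbol{v}\subseteq \boldsymbol{u}} \partial^{\boldsymbol{v}}\varphi(\x)\; \partial^{\boldsymbol{u}\setminus\boldsymbol{v}}\!\left(\frac{f}{q}\right)(\x).
\]
First I would bound each factor. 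For the $f/q$ factor, the growth condition \cref{term:GrowthCondition_Extended} gives $|\partial^{\boldsymbol{u}\setminus\boldsymbol{v}}(f/q)(\x)| \le C e^{A|\x|^2 + B|\x|}$ uniformly over all subsets. For the Gaussian factor, note that $\partial^{\boldsymbol{v}}\varphi(\x) = \varphi(\x)\prod_{j\in\boldsymbol{v}}(-x_j)$, since $\varphi$ is a product of one-dimensional Gaussians and $\frac{d}{dx_j}e^{-x_j^2/2} = -x_j e^{-x_j^2/2}$. Hence $|\partial^{\boldsymbol{v}}\varphi(\x)| = \varphi(\x)\prod_{j\in\boldsymbol{v}}|x_j| \le (2\pi)^{-d/2} e^{-|\x|^2/2}\,|\x|^{|\boldsymbol{v}|}$, using $|x_j|\le |\x|$.

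Combining these, each term in the sum is bounded by $(2\pi)^{-d/2} C\, |\x|^{|\boldsymbol{v}|} e^{-|\x|^2/2} e^{A|\x|^2+B|\x|} = (2\pi)^{-d/2} C\, |\x|^{|\boldsymbol{v}|} \exp\!\big(-(\tfrac12 - A)|\x|^2 + B|\x|\big)$. Summing over $\boldsymbol{v}\subseteq\boldsymbol{u}$ and using $|\x|^{|\boldsymbol{v}|} \le (1+|\x|)^{|\boldsymbol{u}|}$ for every $\boldsymbol{v}\subseteq\boldsymbol{u}$, together with $\sum_{\boldsymbol{v}\subseteq\boldsymbol{u}} 1 = 2^{|\boldsymbol{u}|} \le 2^d$, absorbs the combinatorial constant into $C$ (redefining $C$), yielding
\[
|\partial^{\boldsymbol{u}} f_{IS}(\x)| \le C(1+|\x|)^{|\boldsymbol{u}|}\exp\!\left(-\Big(\tfrac12 - A\Big)|\x|^2 + B|\x|\right),
\]
as claimed.

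This argument is essentially a routine application of the product rule plus crude termwise estimates, so there is no serious obstacle; the only point requiring a little care is the bookkeeping of which constant absorbs the factor $(2\pi)^{-d/2}$ and $2^{|\boldsymbol{u}|}$ — the statement is understood with the constant $C$ allowed to change from line to line (depending on $d$ but not on $\x$ or $\boldsymbol{u}$). One should also note the identity $\partial^{\boldsymbol v}\varphi/\varphi = \prod_{j\in\boldsymbol v}(-x_j)$ holds precisely because $\boldsymbol v$ is a set rather than a general multi-index, which is exactly the setting of the lemma; if higher-order derivatives in a single variable were involved, Hermite-polynomial factors would appear instead, but that case does not arise here.
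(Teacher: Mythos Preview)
Your proof is correct and follows essentially the same route as the paper: Leibniz rule for $f_{IS}=\varphi\cdot(f/q)$, the identity $\partial^{\boldsymbol v}\varphi = \varphi\prod_{j\in\boldsymbol v}(-x_j)$, and the growth bound on $\partial^{\boldsymbol{u}\setminus\boldsymbol v}(f/q)$. The only cosmetic difference is in the final combinatorial step: the paper sums exactly, using $\sum_{\boldsymbol v\subseteq\boldsymbol u}|\x|^{|\boldsymbol v|}=(1+|\x|)^{|\boldsymbol u|}$ by the binomial theorem, so the same constant $C$ survives, whereas you bound each term by $(1+|\x|)^{|\boldsymbol u|}$ and absorb an extra $2^{|\boldsymbol u|}$ into $C$.
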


\begin{proof}
By Leibniz rule, we have
\begin{align*}
\left| \partial^{\boldsymbol{u}} f_{IS} \right| 
&= \left| \sum_{\boldsymbol{u}_1 + \boldsymbol{u}_2 = \boldsymbol{u}} \partial^{\boldsymbol{u}_1} \varphi \partial^{\boldsymbol{u}_2} \frac{f}{q} \right| \\
&= \left| e^{-\frac{|\x|^2}{2}} \sum_{\boldsymbol{u}_1 + \boldsymbol{u}_2 = \boldsymbol{u}} \partial^{\boldsymbol{u}_2} \frac{f}{q} \prod_{j \in \boldsymbol{u}_1} (-x_j) \right| \\
&\leq e^{-\frac{|\x|^2}{2}} \sum_{\boldsymbol{u}_1 + \boldsymbol{u}_2 = \boldsymbol{u}} \left| \partial^{\boldsymbol{u}_2} \frac{f}{q} \right|  |\x|^{|\boldsymbol{u}_1|} \\
&\leq e^{-\frac{|\x|^2}{2}} Ce^{A|\x|^2 + B|\x|}  \sum_{|\boldsymbol{u}_1|} |\x|^{|\boldsymbol{u}_1|} \\
&= C(1 + |\x|)^{|\boldsymbol{u}|} \exp\left\{ -\left(\frac{1}{2} - A\right)|\x|^2 + B|\x| \right\}.
\end{align*}
\end{proof}

\subsection{Proof of Lemma \ref{lemma:V_HK}}
\begin{proof}
By definition of the variation in the sense of Hardy and Krause for smooth functions \cite{niederreiter1992},
\[
V_{HK}(f_{IS} \circ P_R \circ G^{-1}) = \sum_{\varnothing \neq \boldsymbol{u} \subseteq 1:d} \int_{[0,1]^d} \left| \partial^{\boldsymbol{u}} (f_{IS} \circ P_R \circ G^{-1}(\y_{\boldsymbol{u}} : 1_{-\boldsymbol{u}})) \right| .
\]
By Lemma \ref{lemma:DerivativeOfh_IS}, we have
\begin{align} 
&\left| \partial^{\boldsymbol{u}} f \circ P_R \circ G^{-1}(\y_{\boldsymbol{u}} : 1_{-\boldsymbol{u}}) \right| \notag\\
= & \left| \partial^{\boldsymbol{u}} f \left( P_R(\x_{\boldsymbol{u}}) : P_R \circ G^{-1}(1_{-\boldsymbol{u}}) \right) \right| \cdot \left| \prod_{j \in \boldsymbol{u}} \frac{d P_R(G^{-1}(y_j))}{d G^{-1}(y_j)} \frac{d G^{-1}(y_j)}{d y_j} \right| \notag\\
\leq & C (1 + \sqrt{d} R)^{|\boldsymbol{u}|} e^{\sqrt{d} B R}\prod_{j \in \boldsymbol{u}} I_{\{|x_j| \leq R\}} \frac{1}{q(x_j)} ,  \notag
\end{align}
where \( x_j = G^{-1}(y_j)\).
Integrating both side and summing over all \( \boldsymbol{u} \), we obtain that for $ R > 1$,
\begin{align*}
V_{HK}(f_{IS} \circ P_R \circ G^{-1}) 
& \leq \sum_{\boldsymbol{u} \subseteq 1:d} C (1 + \sqrt{d} R)^{|\boldsymbol{u}|} e^{\sqrt{d} B R}(2R)^{|\boldsymbol{u}|} \notag\\
& = C e^{\sqrt{d} B R} \left( 1 + 2 R (1 + \sqrt{d} R) \right)^d \notag\\
&\leq C (2\sqrt{d} + 3)^d R^{2d} e^{\sqrt{d} B R}.
\end{align*}
\end{proof}
\subsection{Proof of Lemma \ref{lemma:ProjectionError}}
\begin{proof}
Let \( H = [-(R-1), R-1]^d \) denote the hypercube in \( \mathbb{R}^d \) with side length \( 2(R-1) \) centered at the origin. Then
\[
\mathbb{E}_q \left| f_{IS}(\boldsymbol{X}) - f_{IS} \circ P_R(\boldsymbol{X}) \right|^2 = \int_{\mathbb{R}^d \setminus H} \left| f_{IS}(\boldsymbol{x}) - f_{IS} \circ P_R(\boldsymbol{x}) \right|^2 q(\boldsymbol{x}) \mathrm{d}\boldsymbol{x}.
\]
Using the mean value theorem, we have
\[
\left| f_{IS}(\boldsymbol{x}) - f_{IS} \circ P_R(\boldsymbol{x}) \right| \leq \left| \nabla f_{IS}(\boldsymbol{\xi}_{\boldsymbol{x}}) \right| \cdot \left| \boldsymbol{x} - P_R (\boldsymbol{x}) \right|,
\]
where \(\boldsymbol{\xi}_{\boldsymbol{x}}\) lies between \(P_R(\boldsymbol{x})\) and \(\boldsymbol{x}\).
By Lemma \ref{lemma:DerivativeOfh_IS}, we have
\[
\left| \nabla f_{IS}(\boldsymbol{\xi}_{\boldsymbol{x}}) \right| \leq \sqrt{d} C (1 + |\boldsymbol{\xi}_{\boldsymbol{x}}|) \exp \left( B |\boldsymbol{\xi}_{\boldsymbol{x}}| \right).
\]
Combing \( |\boldsymbol{\xi}_{\boldsymbol{x}}| = |\lambda P_R(\boldsymbol{x}) + (1-\lambda)\boldsymbol{x}| \leq |\boldsymbol{x}|\), we have
\[
\left| f_{IS}(\boldsymbol{x}) - f_{IS} \circ P_R(\boldsymbol{x}) \right|^2 \leq 4d C^2 (1 + |\boldsymbol{x}|)^2 |\boldsymbol{x}|^2 \exp \left( 2B |\boldsymbol{x}| \right).
\]
Integrating over \( \mathbb{R}^d \setminus H \), we obtain
\begin{align}
\mathbb{E}_q \left| f_{IS}(\boldsymbol{X}) - f_{IS} \circ P_R(\boldsymbol{X}) \right|^2 
& \leq 4d C^2 \int_{\mathbb{R}^d \setminus H} (1 + |\boldsymbol{x}|)^2 |\boldsymbol{x}|^2 \exp \left( 2B |\boldsymbol{x}| \right) q(\boldsymbol{x}) \mathrm{d}\boldsymbol{x} \notag \\
& = 4d C^2\mathbb{E}_q \left[(1+|\boldsymbol{X}|)^2 |\boldsymbol{X}|^2 e^{2B|\boldsymbol{X}|} I_{\mathbb{R}^d \setminus H} \right] \notag\\
& \leq 4d C^2\mathbb{E}_q \left[(1+|\boldsymbol{X}|)^2 |\boldsymbol{X}|^2 e^{2B|\boldsymbol{X}|} I_{\{|\boldsymbol{X}| \geq R - 1\}} \right] \notag\\
& \leq 16d C^2\mathbb{E}_q \left[|\boldsymbol{X}|^4 e^{2B|\boldsymbol{X}|} I_{\{|\boldsymbol{X}| \geq R - 1\}} \right] \notag\\
& \leq 16d C^2 C_0 R^{\alpha} e^{-\beta R^{\gamma}} \notag
\end{align}
for \(R>2\), under the condition, we obtain the desired bound.  

For $1\leq R\leq2$, we note that \( R^{-\alpha}e^{-\beta R^{\gamma}}\mathbb{E}_q \left[(1+|\boldsymbol{X}|)^2 |\boldsymbol{X}|^2 e^{2B|\boldsymbol{X}|} I_{\{|\boldsymbol{X}| \geq R - 1\}} \right]\) as a function of \(R\) is continuous on \(R\in[1,2]\), thus we can find a constant \(C(\alpha,\beta,\gamma)\) such that 
\begin{equation}
\mathbb{E}_q \left[(1+|\boldsymbol{X}|)^2 |\boldsymbol{X}|^2 e^{2B|\boldsymbol{X}|} I_{\{|\boldsymbol{X}| \geq R - 1\}}  \right] \leq C(\alpha,\beta,\gamma) R^{\alpha} e^{-\beta R^{\gamma}}, \notag
\end{equation}
implying that
\begin{equation}
\mathbb{E}_q \left| f_{IS}(\boldsymbol{X}) - f_{IS} \circ P_R(\boldsymbol{X}) \right|^2 \leq 4d C^2 C(\alpha,\beta,\gamma) R^{\alpha} e^{-\beta R^{\gamma}}. \notag
\end{equation}
Therefore, for any \(R > 1\), we can find a constant \(C_1\) such that
\[\mathbb{E}_q \left| f_{IS}(\boldsymbol{X}) - f_{IS} \circ P_R(\boldsymbol{X}) \right|^2 \leq C_1 R^{\alpha} e^{-\beta R^{\gamma}},\]
where \(C_1\) is a constant depending on \(\alpha\), \(\beta\), and \(\gamma\).
This completes the proof.
\end{proof}

\bibliographystyle{plain}
\bibliography{references}

\end{document}